\newfont{\bcb}{msbm10}
\newfont{\matb}{cmbx10}
\newfont{\got}{eufm10}
\newtheorem{theorem}{Theorem}[section]
\newtheorem{lemma}[theorem]{Lemma}
\newtheorem{proposition}[theorem]{Proposition}
\newtheorem{corollary}[theorem]{Corollary}
\theoremstyle{definition}
\theoremstyle{remark}
\newtheorem{remark}[theorem]{Remark}
\numberwithin{equation}{section}
\begin{document}

\title[Definable retractions over Henselian fields]
      {Definable retractions and \\
      a non-Archimedean Tietze--Urysohn \\
      theorem over Henselian valued fields}

\author[Krzysztof Jan Nowak]{Krzysztof Jan Nowak}


\subjclass[2000]{Primary 32P05, 54C20, 32B20; Secondary 14P15,
12J25, 14G27.}

\keywords{Henselian valued fields, definable retractions,
closedness theorem, resolution of singularities, simple normal
crossing divisors, extending continuous definable functions}

\date{}

\begin{abstract}
We prove the existence of definable retractions onto arbitrary
closed subsets of $K^{n}$ definable over Henselian valued fields
$K$. Hence directly follows non-Archimedian analogues of the
Tietze--Urysohn and Dugundji theorems on extending continuous
definable functions. The main ingredients of the proof are a
description of definable sets due to van den Dries, resolution of
singularities and our closedness theorem.
\end{abstract}

\maketitle

\section{Main result}

Fix a Henselian, non-trivially valued field $K$ treated in the
language $\mathcal{L}$ of Denef--Pas. The ground field $K$ is
assumed to be of equicharacteristic zero, not necessarily
algebraically closed and with valuation of arbitrary rank. Let
$v$, $\Gamma$, and $K^{\circ}$ denote the valuation, its value
group and valuation ring, respectively. We consider continuity
with respect to the $K$-topology on $K^{n}$, i.e.\ the topology
induced by the valuation $v$. The word ''definable'' means
''definable with parameters''.

\vspace{1ex}

The main purpose of this paper is the following theorem on the
existence of $\mathcal{L}$-definable retractions onto arbitrary
closed $\mathcal{L}$-definable subsets of the affine space and its
applications to extending continuous $\mathcal{L}$-definable
functions.

\begin{theorem}\label{main}
For each closed $\mathcal{L}$-definable subset $A$ of $K^{n}$,
there exists an $\mathcal{L}$-definable retraction $K^{n} \to A$.
\end{theorem}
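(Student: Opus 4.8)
The plan is to argue by induction on the ambient dimension $n$, using van den Dries's description of $\mathcal{L}$-definable sets to fibre the problem over a lower-dimensional base, resolution of singularities to normalize the local structure of $A$, and the closedness theorem as the instrument for checking continuity. The cases $A = \emptyset$ and $A = K^{n}$ are trivial (take the identity in the latter), so I assume $A$ is a proper nonempty closed $\mathcal{L}$-definable set and put $U = K^{n} \setminus A$, an open definable set. The goal is a continuous $\mathcal{L}$-definable $r \colon K^{n} \to A$ with $r|_{A} = \mathrm{id}_{A}$, and the delicate point throughout will be continuity along the frontier $\partial A$.

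First I would reduce to a model situation. Applying a Denef--Pas cell decomposition compatible with $A$, I may fibre $K^{n}$ by the projection $p \colon K^{n} \to K^{n-1}$ so that each cell is, over its base, a ball or annulus around a definable centre function $c(b)$, and $A$ meets each fibre $p^{-1}(b)$ in a closed definable subset of $K$. In one variable the retraction is elementary: the complement of a closed definable subset of $K$ is a disjoint union of valuative balls, each of which is clopen, and an exterior point is equidistant from every point of such a ball, so collapsing each maximal ball of $U \cap p^{-1}(b)$ onto a suitably chosen point of $A$ in that fibre gives a fibrewise retraction. The inductive hypothesis in dimension $n-1$ controls the behaviour of the centres and of the base, so the real issue is to make this fibrewise selection uniform in $b$ and continuous across all of $K^{n}$.

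To organize the local geometry of $A$ so that a continuous selection exists, I would invoke resolution of singularities: applying embedded resolution in equicharacteristic zero to the algebraic varieties arising in the description of $A$, one obtains a proper birational morphism $\pi$ under which the preimage of $A$ is a simple normal crossing divisor. In the resulting local coordinates this divisor is a union of coordinate hyperplanes, and the nearest-point retraction can then be written down explicitly and is manifestly continuous and definable. The retraction is therefore constructed upstairs, where the geometry is normal crossing, and transported downstairs along $\pi$; since $\pi$ is an isomorphism away from a lower-dimensional locus, the exceptional set is handled by the induction, and the two pieces are glued.

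The main obstacle is precisely this gluing: the continuity of $r$ at $\partial A$ and along the seams where neighbouring local rules meet, where a priori the fibrewise targets need not vary continuously with the base point, and where one must force the images of balls of $U$ shrinking toward a point $a \in A$ to converge to $a$. Here the closedness theorem is decisive. Since continuity is local, near any point I may pass to a bounded definable chart, and I would then verify continuity of $r$ not by direct estimates but through the closed-graph criterion it supplies: for a definable map with bounded image, $r$ is continuous if and only if its graph $\{(x, r(x)) : x \in K^{n}\}$ is closed, which follows from the closedness theorem via the fact that the projection $K^{n} \times (K^{\circ})^{m} \to K^{n}$ is definably closed. Combining the closedness of the graph with the closedness of $A$ and the relation $r|_{A} = \mathrm{id}_{A}$ yields the desired continuous $\mathcal{L}$-definable retraction, closing the induction.
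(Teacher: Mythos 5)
Your proposal names the paper's three ingredients (a van den Dries--type description of definable sets, resolution of singularities, the closedness theorem), and your closed-graph criterion for continuity is legitimate: for a definable map into $(K^{\circ})^{m}$, a closed definable graph $G$ does give continuity, since $r^{-1}(F)$ is the projection of $G \cap (K^{n} \times F)$ and such projections are closed by the closedness theorem. But two steps in your argument are genuinely broken. First, your one-variable fibrewise construction collapses each maximal ball of $U \cap p^{-1}(b)$ onto ``a suitably chosen point of $A$ in that fibre.'' In this setting there are no definable Skolem functions (the paper flags exactly this at the end of Section~1), and in a non-Archimedean field an exterior point is equidistant from \emph{all} points of a ball, so there is no canonical nearest point; you have given no definable, continuous selection, and none is available by this route. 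The paper avoids selection entirely: after resolution the target is a union of coordinate submanifolds, the retractions are literal coordinate projections $x \mapsto (x_{1},\ldots,0,\ldots,x_{n})$ on clopen tubular neighbourhoods, and the overlaps are glued by comparing the valuative distances $d_{\alpha}(x)$ and $d_{\beta}(x)$ to the two strata (Proposition~\ref{ret-3}); your ``nearest-point retraction \ldots manifestly continuous'' passes over precisely this gluing.

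Second, you have no mechanism for getting from Zariski closed sets, where resolution applies, to an arbitrary closed $\mathcal{L}$-definable $A$. Resolution only normalizes the algebraic varieties occurring in the description of $A$; the set $A$ itself is a union of pieces $V_{j} \cap G_{j}$ with $G_{j}$ clopen in the complement of some Zariski closed $W_{j}$, and the whole difficulty is to retract onto such a set. The paper does this by noetherian induction on the Zariski closure of $A$ (Theorem~\ref{induct}): it peels off a clopen piece $E$ of $X_{1} \setminus Y$ on which the identity already works, retracts onto the proper Zariski closed subset $Y$ via the snc case and the descent lemma (Lemma~\ref{descent}), composes with an inductively obtained retraction $Y \to A \cap Y$, and checks continuity at the seam using that every accumulation point of $E$ lying in $Y$ already belongs to $A$. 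Your induction on the ambient dimension via cell decomposition over $K^{n-1}$ does not substitute for this: even granting the (unavailable) fibrewise selection, you give no argument that the resulting map has closed graph, which is where all the content lies.
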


The proof of Theorem~\ref{main} will be given in Section~3.
Section~2 deals with $\mathcal{L}$-definable retractions onto
simple normal crossing divisors, to which the general case comes
down via resolution of singularities and our closedness theorem
(see~\cite{Now-1,Now-2} and~\cite{Now-3} for the non-Archimedean
analytic version).

\vspace{1ex}

Section~5 gives two direct applications, namely non-Archimedean
analogues of the Tietze--Urysohn and Dugundji theorems on
extending continuous definable functions. Note finally that what
makes to a great extent the character of non-Archimedean extension
problems in the definable case different from the one in the
purely topological case is i.a.\ lack of definable Skolem
functions. On the other hand, definability in a suitable language
often makes the subject under study tamer and enables application
of new tools and techniques.

\section{Retractions onto snc divisors and snc varieties}

Let
$$ B_{n}(r) := \{ a \in K^{n}: v(a) := \min \{ v(a_{1}),\ldots,v(a_{n}) \} > r \} $$
be the $n$-ball of center zero and radius $r$. We begin by stating
the following theorem on continuity of intersections, which is a
direct consequence of the closedness theorem.

\begin{proposition}\label{intersection}
Let $F_{1},\dots,F_{m}$ be closed $\mathcal{L}$-definable subsets
of $(K^{\circ})^{n}$. Then for every $r \in \Gamma$ there is a
$\rho \in \Gamma$ such that
$$ (F_{1} + B_{n}(\rho)) \cap \ldots \cap (F_{m} + B_{n}(\rho))
   \subset (F_{1} \cap \ldots \cap F_{m}) + B_{n}(r). $$
\end{proposition}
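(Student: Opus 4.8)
The plan is to convert the statement into one about two definable ``distance'' functions and then to reduce it, by contradiction, to the boundedness of a value-group--valued definable function on a closed bounded set, which is exactly what the closedness theorem delivers.

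First I would introduce, for $a\in(K^{\circ})^{n}$, the definable quantities
$$ d_{i}(a):=\sup\{v(a-x):x\in F_{i}\}, \qquad h(a):=\min_{1\le i\le m}d_{i}(a), \qquad d_{F}(a):=\sup\{v(a-z):z\in F\}, $$
where $F:=F_{1}\cap\dots\cap F_{m}$, all taking values in $\Gamma\cup\{\infty\}$. Unwinding the definitions gives the equivalences $a\in\bigcap_{i}(F_{i}+B_{n}(\rho))\iff h(a)>\rho$ and $a\in F+B_{n}(r)\iff d_{F}(a)>r$, so the assertion to be proved is: for every $r$ there is a $\rho$ with $h(a)>\rho\Rightarrow d_{F}(a)>r$ for all $a$. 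Since $F\subseteq F_{i}$ forces $d_{F}\le h$ pointwise, the inequality we need runs against the trivial one, and the whole difficulty is a uniform comparison of $d_{F}$ and $h$ near the common part $F$.

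Next I would argue by contradiction. Fix $r$ and suppose no $\rho$ works. The set $P:=\{a\in(K^{\circ})^{n}:d_{F}(a)\le r\}$ is the complement in $(K^{\circ})^{n}$ of the open set $F+B_{n}(r)$, hence is closed and bounded, and by assumption $h$ takes values $>\rho$ on $P$ for every $\rho\in\Gamma$, i.e.\ $h$ is unbounded above on $P$. Moreover $h$ is finite on $P$: if $h(a)=\infty$ then $d_{i}(a)=\infty$ for all $i$, so $a\in\overline{F_{i}}=F_{i}$ for each $i$, whence $a\in F$ and $d_{F}(a)=\infty>r$, contradicting $a\in P$. Thus everything comes down to showing that a finite-valued definable function $h\colon P\to\Gamma$ on a closed bounded definable set $P$ is bounded above.

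This last step is where the closedness theorem is used, and is the main obstacle. Using it, the thickenings $F_{i}+\{v\ge\gamma\}$ are closed and the suprema defining the $d_{i}$ are attained, so the set
$$ \Sigma:=\{(\xi,a)\in K^{\circ}\times(K^{\circ})^{n}:a\in P,\ v(\xi)\le h(a)\} $$
is a closed $\mathcal{L}$-definable subset of $K^{\circ}\times(K^{\circ})^{n}$. The closedness theorem, applied to the projection $K^{\circ}\times(K^{\circ})^{n}\to K^{\circ}$ onto the first factor, forces its image to be closed in $K^{\circ}$. But if $h$ were unbounded on $P$, this image would consist of every nonzero $\xi\in K^{\circ}$ (each such $\xi$ satisfies $v(\xi)<h(a)$ for some $a\in P$) while omitting $\xi=0$ (because $h<\infty$ on $P$); as the valuation is non-trivial, $0$ lies in the closure of $K^{\circ}\setminus\{0\}$, so this image is not closed --- a contradiction. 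Hence $h$ is bounded above on $P$, and taking $\rho$ to be such a bound finishes the proof. The delicate point throughout is the closedness of $\Sigma$ (equivalently, of the thickenings $F_{i}+\{v\ge\gamma\}$ together with the continuity of the $d_{i}$), which is precisely the content supplied by the closedness theorem.
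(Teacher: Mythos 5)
Your overall strategy is sound and is in substance the same as the paper's, merely repackaged in terms of the valuative ``distance'' functions $d_{i}$ and $h$: both arguments encode the shrinking radius $\rho$ as the valuation $v(\xi)$ of a field variable, form a definable subset of $K^{\circ}\times(K^{\circ})^{n}$ whose fibre over $\xi$ records membership in the $v(\xi)$-thickenings but not in $(F_{1}\cap\dots\cap F_{m})+B_{n}(r)$, and apply the closedness theorem to the projection onto the $\xi$-coordinate, extracting a contradiction at $\xi=0$. (The paper's set $E$ is essentially your $\Sigma$ with strict inequalities; rather than asserting that it is closed, the paper produces an accumulation point $(0,a)$ of $E$ and shows $a\in F:=F_{1}\cap\dots\cap F_{m}$ while $a\notin F+B_{n}(r)$.)

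The one genuine gap is your justification that $\Sigma$ is closed. Closedness of each thickening $F_{i}+\{v\ge\gamma\}$ for \emph{fixed} $\gamma$, together with attainment of the suprema, only disposes of limit points $(\xi_{0},a_{0})$ of $\Sigma$ with $\xi_{0}\neq 0$; the entire content of the proposition is concentrated at would-be limit points of the form $(0,a_{0})$, about which your justification says nothing. At such a point one must argue that $a_{0}$ is approximated by points $a\in P$ with $h(a)$ arbitrarily large, hence (each $F_{i}$ being closed) $a_{0}\in F$, hence $d_{F}(a_{0})=\infty$ and $a_{0}\notin P$ --- exactly the accumulation-point computation the paper performs, which you have implicitly deferred into the word ``so''. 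A clean repair: write $\Sigma=\Sigma'\cap(K^{\circ}\times P)$, where
$$ \Sigma':=\bigl\{(\xi,a): \text{for each } i \text{ there is } y_{i}\in F_{i} \text{ with } v(a-y_{i})\ge v(\xi)\bigr\}. $$
Each set $\{(\xi,a,y): y\in F_{i},\ v(a-y)\ge v(\xi)\}$ is closed in $K^{\circ}\times(K^{\circ})^{n}\times(K^{\circ})^{n}$, including over $\xi=0$ where it degenerates to $a=y\in F_{i}$; hence $\Sigma'$ is closed by the closedness theorem applied to the projection killing the bounded variable $y$, and $\Sigma'\cap(\{0\}\times(K^{\circ})^{n})=\{0\}\times F$ is disjoint from $K^{\circ}\times P$. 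So $\Sigma$ is closed with empty fibre over $0$, and your projection argument then goes through. This reformulation also lets you bypass the attainment of the suprema $d_{i}$, which for a bounded-above definable subset of an arbitrary value group is itself not automatic and would require a further appeal to the closedness theorem.
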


\begin{proof}
For the contrary, suppose that there is an $r \in \Gamma$ such
that for every $\rho \in \Gamma$ the set
$$ (F_{1} + B_{n}(\rho)) \cap \ldots \cap (F_{m} + B_{n}(\rho)) \setminus
   ((F_{1} \cap \ldots \cap F_{m}) + B_{n}(r)) \neq \emptyset $$
is non-empty. Put
$$ E := \{ (t,x) \in K^{\circ} \times (K^{\circ})^{n}: $$
$$ x \in
   (F_{1} + B_{n}(v(t))) \cap \ldots \cap (F_{m} + B_{n}(v(t))) \setminus
   ((F_{1} \cap \ldots \cap F_{m}) + B_{n}(r)), $$
and let $\pi: K^{\circ} \times (K^{\circ})^{n} \to K^{\circ}$ be
the projection onto the first factor. Then $0$ is an accumulation
point of $\pi(E)$. It follows from the closedness theorem that
there is an accumulation point $(0,a)$ of the set $E$. Then for
any $\rho \in \Gamma$, $ \rho \geq r$, there are elements
$$ a_{1,\rho} \in F_{1}, \ldots, a_{m,\rho} \in F_{m} \ \
   \text{and} \ \ b_{1,\rho}, \ldots, b_{m,\rho} \in B_{n}(\rho) $$
such that
$$ a_{1,\rho} + b_{1,\rho} = \ldots = a_{m,\rho} + b_{m,\rho} \in
   (a + B_{n}(\rho)) \setminus ((F_{1} \cap \ldots \cap F_{m}) + B_{n}(r)). $$
Then
$$ a_{i,\rho} \in (a + B_{n}(\rho)) \setminus ((F_{1} \cap \ldots \cap F_{m}) +
   B_{n}(r)) \ \ \ \text{for} \ \ i=1,\ldots,m. $$
Hence
$$ a \not \in (F_{1} \cap \ldots \cap F_{m}) + B_{n}(r) $$
and $a$ lies in the closure of every set $F_{i}$ whence
$$ a \in F_{1} \cap \ldots \cap F_{m}. $$
But this is impossible, which finishes the proof.
\end{proof}

We obtain several direct consequences.

\begin{corollary}
Under the above assumption, if $F_{1} \cap \ldots \cap F_{m} =
\emptyset$, then there is a $\rho \in \Gamma$ such that
$$ F_{1} + B_{n}(\rho)) \cap \ldots \cap (F_{m} + B_{n}(\rho) =
   \emptyset. $$
\hspace*{\fill} $\Box$
\end{corollary}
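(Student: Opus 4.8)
The plan is to read the Corollary off directly from Proposition~\ref{intersection} by specializing to the degenerate case. The only thing one needs to observe is that the Minkowski sum of the empty set with any ball is again empty, so the right-hand side of the inclusion furnished by the Proposition collapses to $\emptyset$.

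Concretely, I would first fix an arbitrary $r \in \Gamma$; its precise value is immaterial here. Under the hypothesis $F_{1} \cap \ldots \cap F_{m} = \emptyset$, we have
$$ (F_{1} \cap \ldots \cap F_{m}) + B_{n}(r) = \emptyset + B_{n}(r) = \emptyset. $$
Next I would invoke Proposition~\ref{intersection} with this $r$, obtaining a $\rho \in \Gamma$ such that
$$ (F_{1} + B_{n}(\rho)) \cap \ldots \cap (F_{m} + B_{n}(\rho)) \subset (F_{1} \cap \ldots \cap F_{m}) + B_{n}(r) = \emptyset. $$
Since the only subset of the empty set is the empty set itself, the left-hand intersection must be empty, which is exactly the asserted conclusion for this choice of $\rho$.

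I do not anticipate any genuine obstacle: the Corollary is nothing more than the special case of Proposition~\ref{intersection} in which the common intersection is empty, and the entire content is the elementary fact that $\emptyset + B_{n}(r) = \emptyset$. The apparent dependence on $r$ in the Proposition disappears, since every choice of $r$ yields an empty right-hand side and hence a valid $\rho$.
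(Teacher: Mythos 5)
Your proof is correct and matches the paper's intent exactly: the corollary is stated with only a $\Box$, as a direct consequence of Proposition~2.1, and specializing that proposition to the case of empty intersection (using $\emptyset + B_{n}(r) = \emptyset$) is precisely the omitted argument. No gaps.
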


\begin{corollary}
Let $U_{1},\ldots,U_{m}$ be an open $\mathcal{L}$-definable
covering of $(K^{\circ})^{n}$. Then there exists a finite clopen
$\mathcal{L}$-definable partitioning $\{ \Omega_{l}: l=1,\ldots,m
\}$ of $(K^{\circ})^{n}$ such that $\Omega_{l} \subset U_{l}$ for
$l=1,\ldots,m$.
\end{corollary}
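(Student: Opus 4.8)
The plan is to pass to complements, extract a uniform ball radius from the first corollary, and then assemble the desired partition out of cosets of the additive subgroup $B_{n}(\rho)$.

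First I would set $F_{l} := (K^{\circ})^{n} \setminus U_{l}$ for $l=1,\ldots,m$. Each $F_{l}$ is a closed $\mathcal{L}$-definable subset of $(K^{\circ})^{n}$, and since the $U_{l}$ cover $(K^{\circ})^{n}$ we have $F_{1} \cap \ldots \cap F_{m} = \emptyset$. By the preceding corollary there is a $\rho \in \Gamma$ with $(F_{1} + B_{n}(\rho)) \cap \ldots \cap (F_{m} + B_{n}(\rho)) = \emptyset$; enlarging $\rho$ if necessary — which only shrinks the balls and hence preserves the empty intersection — I may assume $\rho \geq 0$. The crucial structural remark is that $B_{n}(\rho)$ is an \emph{open additive subgroup} of $K^{n}$, so its cosets $x + B_{n}(\rho)$ form a clopen partition of $K^{n}$; in particular any predicate of the form ``$x + B_{n}(\rho) \subset U_{l}$'' is constant along each coset.

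Next I would reformulate the emptiness pointwise. For $x \in (K^{\circ})^{n}$ the condition $x \notin F_{l} + B_{n}(\rho)$ means, using $B_{n}(\rho) = -B_{n}(\rho)$, that $(x + B_{n}(\rho)) \cap F_{l} = \emptyset$; and because $\rho \geq 0$ one checks $x + B_{n}(\rho) \subset (K^{\circ})^{n}$, so this is equivalent to $x + B_{n}(\rho) \subset U_{l}$. Thus the empty intersection says precisely: for every $x \in (K^{\circ})^{n}$ there is an index $l$ with $x + B_{n}(\rho) \subset U_{l}$. I would then define, for $l = 1,\ldots,m$,
$$ \Omega_{l} := \{ x \in (K^{\circ})^{n} : x + B_{n}(\rho) \subset U_{l} \ \text{and} \ x + B_{n}(\rho) \not\subset U_{j} \ \text{for all} \ j < l \}. $$

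These $\Omega_{l}$ are $\mathcal{L}$-definable (the containment of a ball is expressed by a bounded quantifier over $B_{n}(\rho)$), they are pairwise disjoint and cover $(K^{\circ})^{n}$ by the pointwise statement above (each $x$ is assigned to the least admissible index), and $\Omega_{l} \subset U_{l}$ since $0 \in B_{n}(\rho)$ forces $x \in x + B_{n}(\rho) \subset U_{l}$. The one point that genuinely needs the structural remark is clopenness: each auxiliary set $\{ x : x + B_{n}(\rho) \subset U_{j} \}$ is a union of cosets of $B_{n}(\rho)$, hence simultaneously open and closed, and $\Omega_{l}$ is built from finitely many of these by intersection and complementation, so it too is clopen. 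I expect this observation — that the ball-containment predicate respects the coset partition — to be the only load-bearing step, the remainder being the bookkeeping of reducing to the corollary.
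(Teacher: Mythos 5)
Your proof is correct and follows essentially the same route as the paper: pass to the complements $F_{l}$, invoke the preceding corollary to get $\rho$ with $\bigcap_{l}(F_{l}+B_{n}(\rho))=\emptyset$, observe that the complements of the $\rho$-hulls are clopen sets contained in the $U_{l}$ covering $(K^{\circ})^{n}$, and disjointify by least index. The only (harmless) difference is that you establish clopenness by the elementary observation that these sets are unions of cosets of the open subgroup $B_{n}(\rho)$, whereas the paper cites the closedness theorem for this point.
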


\begin{proof}
Apply the above corollary to the complements $F_{l} :=
(K^{\circ})^{n} \setminus U_{l}$. Then the sets $F_{l} +
B_{n}(\rho)$ are clopen by the closedness theorem, and their
complements in $(K^{\circ})^{n}$ are thus clopen
$\mathcal{L}$-definable covering of $(K^{\circ})^{n}$. Hence the
conclusion follows immediately.
\end{proof}

\begin{corollary}\label{partition}
Let $X$ be a closed $\mathcal{L}$-definable subset of
$(K^{\circ})^{n}$ and $\{ U_{l}: l=1,\ldots,m \}$ a finite open
$\mathcal{L}$-definable covering of $X$. Then there exists a
finite clopen $\mathcal{L}$-definable partitioning $\{ \Omega_{l}:
l=1,\ldots,m \}$ of $X$ such that $\Omega_{l} \subset U_{l}$ for
$l=1,\ldots,m$.  \hspace*{\fill} $\Box$
\end{corollary}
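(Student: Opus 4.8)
The plan is to reduce this relative statement to the preceding corollary, which already handles open $\mathcal{L}$-definable coverings of the whole box $(K^{\circ})^{n}$. The crux is to extend the relatively open pieces $U_{l} \subset X$ to genuine open $\mathcal{L}$-definable subsets of $(K^{\circ})^{n}$ in a way that does not require choosing radii pointwise, which would run into the lack of definable Skolem functions emphasized in the introduction.

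First I would set $F_{l} := X \setminus U_{l}$ for each $l$. Since $U_{l}$ is open in the subspace $X$, the set $F_{l}$ is closed in $X$; and because $X$ is itself closed in $(K^{\circ})^{n}$, the set $F_{l}$ is closed and $\mathcal{L}$-definable in $(K^{\circ})^{n}$. Hence $V_{l} := (K^{\circ})^{n} \setminus F_{l}$ is an open $\mathcal{L}$-definable subset of $(K^{\circ})^{n}$ satisfying $V_{l} \cap X = X \setminus F_{l} = U_{l}$. This is the step I expect to be the main obstacle — producing definable open extensions of the $U_{l}$ — and it is sidestepped here by passing to closed complements rather than forming unions of balls.

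Next I would adjoin the open $\mathcal{L}$-definable set $V_{0} := (K^{\circ})^{n} \setminus X$. Since the $U_{l}$ cover $X$, we have $X \subset V_{1} \cup \ldots \cup V_{m}$, and together with $V_{0}$ the family $\{ V_{0}, V_{1}, \ldots, V_{m} \}$ is an open $\mathcal{L}$-definable covering of all of $(K^{\circ})^{n}$. Applying the preceding corollary then yields a clopen $\mathcal{L}$-definable partitioning $\{ \Omega'_{0}, \Omega'_{1}, \ldots, \Omega'_{m} \}$ of $(K^{\circ})^{n}$ with $\Omega'_{j} \subset V_{j}$ for every $j$.

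Finally I would put $\Omega_{l} := \Omega'_{l} \cap X$ for $l = 1, \ldots, m$. Because $\Omega'_{0} \subset V_{0} = (K^{\circ})^{n} \setminus X$, we get $\Omega'_{0} \cap X = \emptyset$, so the sets $\Omega_{1}, \ldots, \Omega_{m}$ already exhaust and partition $X$. Each $\Omega_{l}$ is $\mathcal{L}$-definable and, being the trace on $X$ of a clopen subset of $(K^{\circ})^{n}$, is clopen in the subspace topology of $X$; moreover $\Omega_{l} = \Omega'_{l} \cap X \subset V_{l} \cap X = U_{l}$. This delivers the required partitioning, the only delicate point being the definable extension of the $U_{l}$, which the complement construction resolves.
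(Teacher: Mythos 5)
Your proof is correct and follows exactly the reduction the paper intends: the paper leaves Corollary~\ref{partition} as an immediate consequence of the preceding corollary on coverings of $(K^{\circ})^{n}$, and your passage to the closed complements $F_{l}=X\setminus U_{l}$ and the open extensions $V_{l}=(K^{\circ})^{n}\setminus F_{l}$ is the natural way to make that reduction explicit. (A minor simplification: since the $U_{l}$ cover $X$, the sets $V_{1},\ldots,V_{m}$ already cover all of $(K^{\circ})^{n}$, so adjoining $V_{0}$ is harmless but unnecessary.)
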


\begin{corollary}\label{cor-part}
Let $X$ be a closed $\mathcal{L}$-definable subset of
$\mathbb{P}^{n}(K)$ and $\{ U_{l}: l=1,\ldots,m \}$ a finite open
$\mathcal{L}$-definable covering of $X$. Then there exists a
finite clopen partitioning $\{ \Omega_{l}: l=1,\ldots,m \}$ of $X$
such that $\Omega_{l} \subset U_{l}$ for $l=1,\ldots,m$.
\hspace*{\fill} $\Box$
\end{corollary}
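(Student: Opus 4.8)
The plan is to deduce the projective statement from its affine counterpart, Corollary~\ref{partition}, by covering $\mathbb{P}^{n}(K)$ with the standard closed affine charts and refining them into a clopen partition. Write $V_{i}=\{[x_{0}:\cdots:x_{n}]\in\mathbb{P}^{n}(K):\ v(x_{i})\le v(x_{j})\ \text{for all }j\}$ for $i=0,\ldots,n$. Each $V_{i}$ is a closed $\mathcal{L}$-definable subset of $\mathbb{P}^{n}(K)$, the map $[x]\mapsto(x_{j}/x_{i})_{j\neq i}$ is a definable homeomorphism $V_{i}\xrightarrow{\sim}(K^{\circ})^{n}$, and these charts cover $\mathbb{P}^{n}(K)$. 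Closed definable subsets of $\mathbb{P}^{n}(K)$ contained in $V_{i}$ thus correspond to closed definable subsets of $(K^{\circ})^{n}$, which is exactly the setting of Corollary~\ref{partition}.

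First I would produce a clopen $\mathcal{L}$-definable partition of $\mathbb{P}^{n}(K)$ subordinate to $\{V_{i}\}$ by assigning to each point the least index realising the minimal valuation: set $W_{i}=\{[x]:\ v(x_{i})\le v(x_{j})\ \forall j\ \text{and}\ v(x_{i})<v(x_{j})\ \forall j<i\}$. Then $W_{i}\subset V_{i}$, the $W_{i}$ are pairwise disjoint and cover $\mathbb{P}^{n}(K)$, and under the chart isomorphism $W_{i}$ corresponds to $\{(t_{j})_{j\ne i}:\ v(t_{j})>0\ \text{for}\ j<i,\ v(t_{j})\ge 0\ \text{for}\ j>i\}$, a clopen, hence closed, $\mathcal{L}$-definable subset of $(K^{\circ})^{n}$. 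The point is that each $W_{i}$ is clopen already in $\mathbb{P}^{n}(K)$: by the ultrametric inequality the set of indices realising the minimal valuation is locally constant (one coordinate can drop strictly below its neighbours only under a perturbation that is large in the projective metric), so the least such index is a locally constant $\mathcal{L}$-definable function whose fibres $W_{i}$ are therefore clopen.

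With this partition in hand the recombination is routine. For each $i$ the trace $X\cap W_{i}$ is, through the chart isomorphism, a closed $\mathcal{L}$-definable subset of $(K^{\circ})^{n}$, and $\{U_{l}\cap W_{i}:\ l=1,\ldots,m\}$ is a finite open $\mathcal{L}$-definable covering of it. Applying Corollary~\ref{partition} yields a clopen (in $X\cap W_{i}$) $\mathcal{L}$-definable partition $\{\Omega_{i,l}:\ l=1,\ldots,m\}$ with $\Omega_{i,l}\subset U_{l}$. Since $W_{i}$ is clopen in $\mathbb{P}^{n}(K)$, being clopen in $X\cap W_{i}$ is the same as being clopen in $X$, so putting $\Omega_{l}:=\bigcup_{i=0}^{n}\Omega_{i,l}$ gives a clopen $\mathcal{L}$-definable partition $\{\Omega_{l}\}$ of $X$ with $\Omega_{l}\subset U_{l}$, as required.

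I expect the only genuine obstacle to be the clopenness of the refinement $\{W_{i}\}$, that is, the local constancy of the least-minimising index; everything else is formal gluing once Corollary~\ref{partition} is granted. Equivalently, this step amounts to exhibiting a definable homeomorphism of $\mathbb{P}^{n}(K)$ onto a closed bounded $\mathcal{L}$-definable subset of $(K^{\circ})^{n+1}$, namely the disjoint union of the images of the $W_{i}$, after which the projective case becomes literally an instance of the affine one. This is precisely where the non-Archimedean geometry of $\mathbb{P}^{n}(K)$, and implicitly the closedness theorem underlying Corollary~\ref{partition}, does the real work.
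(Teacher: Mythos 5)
Your proposal is correct and follows exactly the route the paper leaves implicit (the corollary is stated with no written proof): cover $\mathbb{P}^{n}(K)$ by the standard clopen polydisc charts $V_{i}\cong(K^{\circ})^{n}$, refine to the clopen partition $W_{i}$ by the least index attaining the minimal valuation, apply Corollary~\ref{partition} on each piece, and glue. The clopenness of the $W_{i}$, which you correctly flag as the one point needing care, does hold: after normalising $x_{i}=1$ the predicates $v(x_{j})>0$ are locally constant by the ultrametric inequality, so your argument is complete.
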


Let $M$ be a non-singular $K$-variety. Recall that a divisor $H$
on $M$ is a {\em simple normal crossing} divisor at a point $a \in
M$ if it is given, in suitable local coordinates $x_{1},
\ldots,x_{n}$ at $a$, by an equation
$$ x_{i_{1}} \cdot \ldots \cdot x_{i_{k}} = 0, \ \
   1 \leq i_{1} < \ldots < i_{k} \leq n, $$
in a Zariski open neighborhood of $a$. If this holds at every
point $a \in M$, we say that $H$ is a {\em simple normal crossing}
divisor (abbreviated to {\em snc} divisor).

We shall need the following version of resolution of singularities
(see e.g.~\cite[Chap.~III]{Kol}.

\begin{theorem}\label{SNC}
Given an algebraic subvariety $V$ of the projective space
$K\mathbb{P}^{n}$, there exists a finite composite $\sigma: M \to
K\mathbb{P}^{n}$ of blow-ups along smooth centers such that the
pre-image $\sigma^{-1}(V)$ is a s.n.c.\ divisor and which is an
isomorphism over the complement of $V$. This means that there is a
Zariski open covering $\{ U_{l}: l=1,\ldots,m \}$ of $M$ such that
the trace $\sigma^{-1}(V) \cap U_{l}$ is a s.n.c.\ divisor in
suitable local coordinates $x_{1}, \ldots,x_{n}$ on $U_{l}$ for
each $l=1,\ldots,m$. \hspace*{\fill}$\Box$
\end{theorem}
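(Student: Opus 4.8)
The plan is to deduce Theorem~\ref{SNC} from the principalization (or logarithmic resolution) of ideal sheaves on a smooth variety, which is the heart of Hironaka's embedded resolution in characteristic zero; I follow the algorithmic treatment of Bierstone--Milman, W\l odarczyk and Koll\'ar. Starting from the radical ideal sheaf $\mathcal{I}_{V}$ defining $V$ on the smooth ambient variety $M_{0} = K\mathbb{P}^{n}$, the goal is to produce a composite $\sigma$ of blow-ups at smooth centers so that the total transform $\sigma^{*}\mathcal{I}_{V}\cdot\mathcal{O}_{M}$ is locally monomial, i.e.\ cuts out an s.n.c.\ divisor, while each center is contained in the successive strict transform of $V$. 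This last condition is what forces $\sigma$ to be an isomorphism over $K\mathbb{P}^{n}\setminus V$.

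The engine of the proof is an induction on the dimension $n$ of the ambient space, driven by a well-ordered local invariant. At a point $a$ where the order $d := \operatorname{ord}_{a}(\mathcal{I})$ is maximal, the essential use of characteristic zero enters through \emph{maximal contact}: differentiating a suitable local generator $d-1$ times produces an element with a unit linear part, whose vanishing defines a smooth hypersurface $H$ through $a$ that must contain every admissible center and every later point of maximal order. Restricting the problem to $H$ via the \emph{coefficient ideal}---a weighted combination of $\mathcal{I}$ and its partial derivatives, arranged so that its principalization on $H$ pulls back to a strict decrease of the maximal order of $\mathcal{I}$ on $M$---reduces the $n$-dimensional problem to an $(n-1)$-dimensional one, to which the inductive hypothesis applies.

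Each blow-up along the center prescribed by this procedure strictly decreases a lexicographically ordered tuple: the maximal order, followed by the coefficient-ideal invariant on $H$, together with bookkeeping data recording the intersection of the transform with the already-created exceptional divisors. Since this tuple takes values in a well-ordered set, the process terminates after finitely many blow-ups, yielding the required $\sigma: M \to K\mathbb{P}^{n}$; the charts of the successive blow-ups then furnish the Zariski open covering $\{U_{l}\}$ together with local coordinates $x_{1},\ldots,x_{n}$ in which $\sigma^{-1}(V)\cap U_{l}$ is given by a monomial equation.

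The main obstacle is twofold. First, one must show that the maximal contact hypersurface and the coefficient ideal can be chosen \emph{canonically}, so that the local resolutions glue to a global one and remain compatible under the blow-ups; this is precisely where the functoriality of the construction, and the independence of the invariant from local coordinate choices, must be verified. Second, and most delicate, is the bookkeeping of the \emph{old boundary}: one has to enrich the invariant so that it simultaneously records how the current transform meets the accumulated exceptional divisors, guaranteeing that the new centers meet them transversally and that the final total transform is genuinely s.n.c.\ rather than merely monomial along the strict transform. Proving that this enriched invariant still drops at every step, and remains well-founded, is the crux of the termination argument.
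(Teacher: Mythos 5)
The paper does not prove this theorem at all: it is quoted as a known form of embedded resolution of singularities in characteristic zero, with a citation to Koll\'ar's \emph{Lectures on resolution of singularities} (Chap.~III), and your outline --- principalization of $\mathcal{I}_{V}$ via maximal contact hypersurfaces, coefficient ideals, and a well-ordered invariant tracking the old exceptional boundary --- is a faithful summary of exactly the proof given in that cited source. So your approach coincides with the paper's (which is simply to invoke this standard result); the only caveat is that your text is, by its nature, a roadmap that names rather than carries out the two hard points (functoriality of the local constructions and the boundary bookkeeping), which is acceptable here since the theorem is imported, not proved, in the paper.
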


Denote by $M(K)$ the set of all $K$-rational points of a
$K$-variety $M$. Theorem~\ref{SNC} and Corollary~\ref{cor-part}
yield immediately

\begin{corollary}\label{cor-SNC}
Given an algebraic subset $V$ of the projective space
$\mathbb{P}^{n}(K)$, there exist a finite composite $\sigma: M \to
K\mathbb{P}^{n}$ of blow-ups along smooth centers and a clopen
$\mathcal{L}$-definable partitioning $\{ U_{l}: l=1,\ldots,m \}$
of $M(K)$ such that the trace $\sigma^{-1}(V) \cap U_{l}$ is a
s.n.c.\ divisor in suitable local coordinates $x_{1},
\ldots,x_{n}$ on $U_{l}$ for each $l=1,\ldots,m$.
\hspace*{\fill}$\Box$
\end{corollary}

The above manifold $M$ can be of course regarded as a non-singular
subvariety of a projective space $K\mathbb{P}^{N}$. In this paper
we are interested just in the $K$-rational points of
$K$-varieties. By abuse of notation, we shall use the same letter
$M$ to designate the $K$-rational points of a given $K$-variety
when no confusion can arise.

\vspace{1ex}

Fix a clopen $\mathcal{L}$-definable chart $U = U_{l}$ on $M$ with
local coordinates $x_{1},\ldots,x_{n}$ considered in
Corollary~\ref{cor-SNC}. We first prove the following

\begin{proposition}\label{retract}
Every smooth divisor $H_{i} := \{ x \in U: x_{i} =0 \}$ is a
retract of a clopen neighborhood $\/\Omega_{i}\/$ of $\/H_{i}$ in
$U$, with $\mathcal{L}$-definable retraction
$$ \omega_{i}: \Omega_{i} \to H_{i}, \ \ (x_{1},\ldots,x_{n}) \mapsto
   (x_{1},\ldots,x_{i-1},0,x_{i+1},\ldots,x_{n}), $$
induced by the local coordinates on $U$.
\end{proposition}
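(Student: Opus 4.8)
The plan is to reduce the statement to a concrete assertion about a coordinate projection on an open subset of $K^{n}$ and then to produce the required clopen domain by means of Corollary~\ref{cor-part}.

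First I would exploit the local coordinates. The system $x=(x_{1},\ldots,x_{n})$ defines an $\mathcal{L}$-definable homeomorphism of $U$ onto an open $\mathcal{L}$-definable subset $\widetilde{U}\subseteq K^{n}$, under which $H_{i}$ corresponds to $\widetilde{U}\cap\{\,y_{i}=0\,\}$ and the prescribed map $\omega_{i}$ corresponds to the coordinate projection $p_{i}\colon(y_{1},\ldots,y_{n})\mapsto(y_{1},\ldots,y_{i-1},0,y_{i+1},\ldots,y_{n})$. Continuity and $\mathcal{L}$-definability of $\omega_{i}$, together with $\omega_{i}|_{H_{i}}=\mathrm{id}$, are then immediate, since $p_{i}$ is a continuous $\mathcal{L}$-definable map fixing $\{\,y_{i}=0\,\}$ pointwise. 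Consequently the entire content of the proposition lies in exhibiting a clopen $\mathcal{L}$-definable neighborhood of $H_{i}$ on which $p_{i}$ takes its values back in $\widetilde{U}$.

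Next I would isolate such a neighborhood directly. Put $W:=\{\,y\in\widetilde{U}\colon p_{i}(y)\in\widetilde{U}\,\}$. This set is $\mathcal{L}$-definable, it is open because $p_{i}$ is continuous and $\widetilde{U}$ is open, and it contains $H_{i}$; indeed, for $a\in H_{i}$ one has $a_{i}=0$, so any ball $a+B_{n}(\rho)\subseteq\widetilde{U}$ is stable under $p_{i}$ and hence lies in $W$, which confirms that $W$ is an open neighborhood of $H_{i}$. By construction $p_{i}(W)\subseteq\widetilde{U}\cap\{\,y_{i}=0\,\}$, so $p_{i}$ restricts to a retraction of $W$ onto $H_{i}$. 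The only feature still missing is clopenness of the domain: since $\widetilde{U}$ is merely open and $H_{i}$ may accumulate on its boundary, $W$ itself need not be clopen, and the radius $\rho$ above cannot be chosen uniformly.

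Finally I would globalize via Corollary~\ref{cor-part}, and this is the step I expect to be the main obstacle. Since $M$ is projective and $U$ is clopen in $M$, the set $H_{i}$, being closed in $U$, is closed in $M$, while $M$ is closed in $\mathbb{P}^{N}(K)$. Transporting $W$ back to $U$ gives an open $\mathcal{L}$-definable subset of $M$ (open because $U$ is clopen), and together with the open set $M\setminus H_{i}$ it forms a finite open $\mathcal{L}$-definable covering of the closed set $M$. Corollary~\ref{cor-part} then yields a clopen partitioning whose member contained in $W$ is a clopen $\mathcal{L}$-definable set $\Omega_{i}$ with $H_{i}\subseteq\Omega_{i}\subseteq W$. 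Restricting $p_{i}$, that is $\omega_{i}$, to $\Omega_{i}$ produces the desired continuous $\mathcal{L}$-definable retraction $\omega_{i}\colon\Omega_{i}\to H_{i}$. The force of the argument sits in this last passage, where the closedness theorem, through Corollary~\ref{cor-part}, converts the non-uniform, merely open local retraction into a single clopen $\mathcal{L}$-definable one.
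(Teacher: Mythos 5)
There is a genuine gap at the very first step. You assume that the coordinate system $x=(x_{1},\ldots,x_{n})$ gives an $\mathcal{L}$-definable \emph{homeomorphism} of $U$ onto an open subset $\widetilde{U}\subseteq K^{n}$, and everything afterwards is transported through this identification. But the local coordinates produced by resolution of singularities are only \'etale coordinates: the map $\phi=(x_{1},\ldots,x_{n})\colon U\to K^{n}$ is a local homeomorphism, in general not injective on $U$ (think of a non-rational chart; a Zariski open subset of $M$ need not embed into affine space via its coordinates). Without injectivity there is no $\widetilde{U}$, no well-defined set $W$, and --- more to the point --- the formula $(x_{1},\ldots,x_{n})\mapsto(x_{1},\ldots,x_{i-1},0,x_{i+1},\ldots,x_{n})$ does not single out a point of $H_{i}$: the tuple with $i$-th entry $0$ may have several preimages in $U$, or none. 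This well-definedness is precisely the content of the proposition, and it is what the paper's proof spends all its effort on: it invokes quantitative forms of the inverse and implicit function theorems (with radii controlled by the valuation of Jacobian minors) and the closedness theorem to get a \emph{uniform} $r\in\Gamma$ such that $\phi$ is injective on $U\cap B_{N}(a,r)$ for every $a\in U$; only then is $\omega_{i}$ well defined on the $r$-hull $\Omega_{i}=(H_{i}+B_{N}(0,r))\cap U$, which is clopen by the closedness theorem.

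Your later idea --- covering $M$ by an open neighborhood of $H_{i}$ on which the retraction makes sense together with $M\setminus H_{i}$, and then using Corollary~\ref{cor-part} to shrink to a clopen $\Omega_{i}$ --- is sound in itself and would be a legitimate alternative to the paper's direct construction of $\Omega_{i}$ as a uniform $r$-hull. But it only addresses the secondary issue (openness versus clopenness of the domain), not the primary one (existence and uniqueness of the value $\omega_{i}(x)$ as a point of $U$), so it cannot rescue the argument as written. To repair the proof you would need to establish the uniform local injectivity of $\phi$ first, essentially reproducing the paper's use of the closedness theorem on the nonvanishing Jacobian minors.
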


\begin{proof}
Note that a polynomial map $f:K^{N} \to K^{N}$, $f(a)=b$, with
coefficients in $K^{\circ}$ and non-zero Jacobian $e(a) \neq 0$ at
$a$, is an open embedding of the $N$-ball $B_{N}(a,v(e(a)))$ onto
the $N$-ball $B_{n}(b,2 \cdot v(e(a)))$
(cf.~\cite[Proposition~2.4]{Now-2}); here
$$ B_{N}(a,r) := \{ x \in K^{N}: v(x-a) > r \} $$
is the ball with center $a$ and radius $r$.

\vspace{1ex}

Similarly, for an implicit function $y = f(x)$ given by a finite
number of polynomial equations with coefficients in $K^{\circ}$
and for each point $(a,b)$ of its graph, $f$ is uniquely
determined in a polydisk
$$ \{ (x,y): \ v(x) > 2 \cdot v(e(a,b)), \ v(y) > v(e(a,b)) \}, $$
where $e(a,b) \neq 0$ is the suitable minor of the Jacobian matrix
of those equations (cf.~\cite[Proposition~2.5]{Now-2}). Apply
these facts to the suitable equations of the non-singular
subvariety $M$ of $\mathbb{P}^{N}(K)$ and to the coordinate map
$\phi = (x_{1},\ldots,x_{n})$ on $U$. Since the suitable minors do
not vanish on $U$, it follows from the closedness theorem that the
valuation of those minors are uniformly bounded from above on $U$.
Hence there is an $r \in \Gamma$ such that for each point $a \in
U$ the coordinate map $\phi$ is injective on $U \cap B_{N}(a,r)$;
here balls are with respect to the ambient projective space
$\mathbb{P}^{N}(K)$. Then the following formulas in the local
coordinates on $U$:
$$ \omega_{i}(x_{1},\ldots,x_{n}) =
   (x_{1},\ldots,x_{i-1},0,x_{i+1},\ldots,x_{n}), \ \ i=1,\ldots,n, $$
determine well defined retractions $\omega_{i}$ of the clopen (by
virtue of the closedness theorem) $r$-hulls
$$ \Omega_{i} := (H_{i} + B_{N}(0,r)) \cap U $$
of the divisors $H_{i}$ in $U$, we are looking for.
\end{proof}

By a coordinate submanifold of $U$ we mean one of the form
$$ C_{\alpha} := \{ x \in U: x_{\alpha_{1}} = \ldots = x_{\alpha_{p}} = 0
   \}, $$
where $\alpha = (\alpha_{1}, \ldots, \alpha_{p})$ and $1 \leq
\alpha_{1} < \ldots < \alpha_{p} \leq n$. An snc subvariety is a
finite union of coordinate submanifolds of $U$.

It is not difficult to deduce from the closedness theorem that
there is a $\rho \in \Gamma$ such that
$$ U_{\alpha} := \{ x \in U: v(x_{\alpha_{1}}), \ldots, v(x_{\alpha_{p}})
   > \rho \} \subset \Omega_{\alpha} := (C_{\alpha} + B_{N}(0,r)) \cap U $$
for all $\alpha = (\alpha_{1}, \ldots, \alpha_{p})$ as above. The
proof of Proposition~\ref{retract} yields immediately the
following

\begin{corollary}\label{ret-1}
The formulas in the local coordinates on $U$:
$$ \omega_{i}(x_{1},\ldots,x_{n}) =
   (x_{1},\ldots,x_{i-1},0,x_{i+1},\ldots,x_{n}), \ \ i=1,\ldots,n, $$
determine well defined retractions $\omega_{i}$ of the clopen
neighborhoods $U_{i}$ onto the divisors $H_{i}$, respectively.
\hspace*{\fill} $\Box$
\end{corollary}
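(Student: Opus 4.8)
The plan is to obtain this statement simply as the restriction, to the explicit coordinate neighborhood $U_{i}$, of the retraction already constructed in Proposition~\ref{retract}. I would begin by specializing the displayed inclusion preceding the corollary to the one-element multi-index $\alpha = (i)$ (that is, $p = 1$). In this case the coordinate submanifold $C_{\alpha}$ reduces to the divisor $H_{i}$, and $U_{\alpha}$ becomes $U_{i} = \{ x \in U : v(x_{i}) > \rho \}$, so the inclusion reads $U_{i} \subset \Omega_{i} = (H_{i} + B_{N}(0,r)) \cap U$. Since the displayed inclusion holds for all $\alpha$ with one and the same $\rho$, a single $\rho$ serves all $i = 1, \ldots, n$ simultaneously.

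Next I would check that each $U_{i}$ is a clopen $\mathcal{L}$-definable neighborhood of $H_{i}$ in $U$. It contains $H_{i}$, since $x_{i} = 0$ forces $v(x_{i}) = \infty > \rho$; it is definable and open, being cut out by a ball condition on the single coordinate $x_{i}$; and it is closed, hence clopen, by the closedness theorem, which guarantees that such valuative balls are clopen in the $K$-topology.

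It then remains to transport the retraction. By the inclusion $U_{i} \subset \Omega_{i}$, the coordinate formula $\omega_{i}(x_{1},\ldots,x_{n}) = (x_{1},\ldots,x_{i-1},0,x_{i+1},\ldots,x_{n})$ restricts from $\Omega_{i}$ to $U_{i}$ without alteration, and its well-definedness is inherited verbatim from Proposition~\ref{retract}: the uniform injectivity of the coordinate map $\phi$ on the balls $U \cap B_{N}(a,r)$ already guarantees that, for every point of $\Omega_{i}$, the prescribed image is the unique point of $H_{i}$ sharing all but the $i$-th local coordinate, and passing to the smaller domain $U_{i}$ can only preserve this. The restricted map is a retraction onto $H_{i}$, since its image plainly lies in $H_{i}$ and it is the identity on $H_{i}$ (there $x_{i} = 0$, so $\omega_{i}(x) = x$), and it is $\mathcal{L}$-definable because it is expressed through the definable chart $\phi$.

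The only place where genuine work is hidden is the inclusion $U_{i} \subset \Omega_{i}$ itself, which compares the valuation of the single coordinate $x_{i}$ with the ambient distance measured in $\mathbb{P}^{N}(K)$ and invokes the closedness theorem to secure a uniform $\rho$; this is precisely the content of the sentence preceding the corollary. Granting that inclusion, the corollary is no more than the restriction of the map of Proposition~\ref{retract} to the coordinate neighborhood $U_{i}$, and I would expect no further obstacle.
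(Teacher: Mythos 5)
Your proposal is correct and follows essentially the same route as the paper: the paper likewise obtains the corollary by combining the inclusion $U_{\alpha} \subset \Omega_{\alpha}$ (specialized to $\alpha = (i)$) with the construction in the proof of Proposition~\ref{retract}, restricting the already well-defined retraction on $\Omega_{i}$ to the smaller clopen neighborhood $U_{i}$. You also correctly locate the only substantive input in the displayed inclusion secured by the closedness theorem, which the paper takes as given immediately before the corollary.
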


Similarly, we obtain

\begin{corollary}\label{ret-2}
For every $\alpha$ as before, the formula in local coordinates on
$U$:
$$ \omega_{\alpha}(x) = (x_{1},\ldots,x_{\alpha_{1}-1},0,x_{\alpha_{1}+1},\ldots,
   x_{\alpha_{p}-1},0,x_{\alpha_{p}+1}, \ldots, x_{n}). $$
determines a well defined retraction $\omega_{\alpha}$ of the
clopen neighborhood $U_{\alpha}$ onto the coordinate submanifold
$C_{\alpha}$.  \hspace*{\fill} $\Box$
\end{corollary}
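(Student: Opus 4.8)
The plan is to build $\omega_\alpha$ by iterating the single-coordinate retractions already produced in Corollary~\ref{ret-1}, rather than repeating the analytic estimates of Proposition~\ref{retract}. Writing $\alpha=(\alpha_1,\ldots,\alpha_p)$, I would set
$$ \omega_\alpha := \omega_{\alpha_p}\circ\cdots\circ\omega_{\alpha_1}, $$
where $\omega_{\alpha_j}\colon U_{\alpha_j}\to H_{\alpha_j}$ is the retraction furnished by Corollary~\ref{ret-1} (with $U_{\alpha_j}$ the single-index neighborhood $U_i$ for $i=\alpha_j$). Because the single uniform $\rho$ fixed before that corollary gives $U_\alpha = U_{\alpha_1}\cap\cdots\cap U_{\alpha_p}$, the composition is defined on $U_\alpha$ to begin with, and each factor is continuous and $\mathcal{L}$-definable.

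The first thing I would check is that the domains chain correctly, i.e.\ that each partial image lands in the domain of the next factor. The decisive observation is that $\omega_{\alpha_j}$ alters only the $\alpha_j$-th local coordinate, replacing it by $0$ and leaving every other coordinate untouched. Hence, for $x\in U_\alpha$, the point $\omega_{\alpha_1}(x)$ lies in $H_{\alpha_1}\subset U$ and still satisfies $v\bigl((\omega_{\alpha_1}(x))_{\alpha_j}\bigr)=v(x_{\alpha_j})>\rho$ for every $j\geq 2$, so $\omega_{\alpha_1}(x)\in U_{\alpha_2}\cap\cdots\cap U_{\alpha_p}$ and $\omega_{\alpha_2}$ may be applied. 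Iterating, after the $j$-th step the coordinates $\alpha_1,\ldots,\alpha_j$ have been zeroed while the valuations of the remaining $\alpha$-coordinates are unchanged, so each intermediate point stays in the domain of the next retraction. After all $p$ steps exactly the coordinates indexed by $\alpha$ have been set to zero and the others left as in $x$, which is precisely the stated formula; in particular the image lies in $C_\alpha$.

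Granting the chaining, the remaining assertions are formal. The map $\omega_\alpha$ is continuous and $\mathcal{L}$-definable as a composition of continuous $\mathcal{L}$-definable maps, and it retracts $U_\alpha$ onto $C_\alpha$ because each $\omega_{\alpha_j}$ fixes its divisor pointwise: if $x\in C_\alpha$ then all its $\alpha$-coordinates already vanish, every factor acts as the identity, and $\omega_\alpha(x)=x$. As an alternative I could simply rerun the proof of Proposition~\ref{retract} with the single divisor $H_i$ replaced by the coordinate submanifold $C_\alpha$: injectivity of the coordinate map $\phi$ on the balls $U\cap B_N(a,r)$ yields uniqueness of the point with the prescribed zeroed coordinates, while the implicit-function statement used there yields its existence inside $U$ once $x\in U_\alpha\subset\Omega_\alpha$.

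I expect the only step needing genuine care to be the chaining of domains — the fact that zeroing one coordinate does not destroy membership in the neighborhoods $U_{\alpha_j}$ attached to the coordinates not yet treated. This rests entirely on $\omega_{\alpha_j}$ preserving the valuations of all coordinates other than $\alpha_j$, together with the use of one uniform $\rho$ valid for every index tuple, so that $U_\alpha$ sits inside each $U_{\alpha_j}$ at once. Everything else is routine.
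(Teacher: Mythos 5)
Your proposal is correct, and your primary argument is a genuinely different (and more self-contained) route than the paper's. The paper disposes of this corollary with a ``similarly'': it reruns the argument of Proposition~\ref{retract} directly for the multi-index case, using the prepared inclusion $U_{\alpha} \subset \Omega_{\alpha} = (C_{\alpha} + B_{N}(0,r)) \cap U$ together with the injectivity of the coordinate map $\phi$ on the balls $U \cap B_{N}(a,r)$ to get existence and uniqueness of the point of $C_{\alpha}$ with the prescribed zeroed coordinates --- this is exactly the ``alternative'' you sketch at the end. Your main line instead factors $\omega_{\alpha}$ as $\omega_{\alpha_{p}} \circ \cdots \circ \omega_{\alpha_{1}}$ and reduces everything to Corollary~\ref{ret-1}; the one point of substance, which you correctly identify and verify, is the chaining of domains, and it does hold because $\omega_{\alpha_{j}}$ leaves all local coordinates other than the $\alpha_{j}$-th untouched (so the valuations $v(x_{\alpha_{k}})>\rho$ for $k \neq j$ persist) and because the single uniform $\rho$ makes $U_{\alpha}$ equal to the intersection of the $U_{\alpha_{j}}$. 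What the composition buys is that you never have to reprove the implicit-function and injectivity estimates for a multi-coordinate center; what the paper's route buys is a single uniform statement that also produces the neighborhood $\Omega_{\alpha}$ itself, which is what the inclusion $U_{\alpha}\subset\Omega_{\alpha}$ quoted before Corollary~\ref{ret-1} records. Either way the retraction property, continuity and $\mathcal{L}$-definability are formal, as you say.
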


The above results can be strengthened as follows.

\begin{proposition}\label{ret-3}
Every snc subvariety $C$ of $U$, which is a finite union of
coordinate submanifolds $C_{\alpha}, C_{\beta}, \dots$, is an
$\mathcal{L}$-definable retract of the union $U_{C}$ of the clopen
neighborhoods $U_{\alpha}, U_{\beta}, \dots$.
\end{proposition}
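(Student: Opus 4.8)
The plan is to glue the elementary coordinate-zeroing retractions of Corollary~\ref{ret-2} along a clopen partition of $U_C$ subordinate to the covering $\{U_\alpha, U_\beta, \dots\}$. I argue by induction on the number of coordinate submanifolds making up $C$, the one-stratum case being exactly Corollary~\ref{ret-2}. For the inductive step I peel off a single stratum and write $C = C' \cup C_\gamma$, where $C' = C_\alpha \cup C_\beta \cup \dots$ carries an already-constructed $\mathcal{L}$-definable retraction $\omega': U_{C'} \to C'$ and where $\omega_\gamma: U_\gamma \to C_\gamma$ is the coordinate-zeroing retraction of Corollary~\ref{ret-2}; here $U_C = U_{C'} \cup U_\gamma$. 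On $U_{C'} \setminus U_\gamma$ I set $\omega := \omega'$ and on $U_\gamma \setminus U_{C'}$ I set $\omega := \omega_\gamma$, both sets being clopen in $U_C$, so the whole problem concentrates on the overlap $U_{C'} \cap U_\gamma$.

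Over the overlap the two candidate maps $\omega'$ and $\omega_\gamma$ both restrict to the identity on $C' \cap C_\gamma$, so a point of $C$ lying in the overlap is fixed by $\omega'$ when it lies in $C'$ and by $\omega_\gamma$ when it lies in $C_\gamma$; only off $C' \cap C_\gamma$ can the two values differ. To decide between them I compare, for $x$ in the overlap, the two $\Gamma \cup \{\infty\}$-valued quantities $d'(x) := \max_{\alpha} \min_{j \in \alpha} v(x_j)$, where (identifying each multi-index with its set of entries) $\alpha$ ranges over the strata of $C'$, and $d_\gamma(x) := \min_{j \in \gamma} v(x_j)$; thus $x \in C'$ exactly when $d'(x) = \infty$ and $x \in C_\gamma$ exactly when $d_\gamma(x) = \infty$. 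I then put $\omega := \omega'$ on $V' := \{x : d'(x) \geq d_\gamma(x)\}$ and $\omega := \omega_\gamma$ on its complement $V_\gamma$ in the overlap. On $(C' \setminus C_\gamma) \cap U_\gamma$ one has $d' = \infty > d_\gamma$, so $\omega'$ is used and fixes $x$, while on $(C_\gamma \setminus C') \cap U_{C'}$ one has $d_\gamma = \infty > d'$, so $\omega_\gamma$ is used and fixes $x$; wherever the two disagree the common value is finite, forcing $x \notin C$, so the retraction property is never violated. Since $V'$ and $V_\gamma$ are clopen, continuity of $\omega$ on each piece is inherited from $\omega'$ and $\omega_\gamma$ with no compatibility condition along a shared boundary, while $\omega(U_C) \subseteq C' \cup C_\gamma = C$ and $\omega|_C = \mathrm{id}$ by the above; definability is evident throughout.

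The technical heart, and the step I expect to be the main obstacle, is the clopenness of the comparison set $V' = \{d' \geq d_\gamma\}$ inside the overlap. The two loci to be separated, $(C' \setminus C_\gamma) \cap U_\gamma$ and $(C_\gamma \setminus C') \cap U_{C'}$, are disjoint but their closures typically meet along $C' \cap C_\gamma$, so no separation of their closures is available; what rescues the construction is precisely that $\omega'$ and $\omega_\gamma$ agree on that common part, so a clopen cut may pass through $C' \cap C_\gamma$ arbitrarily. Establishing that $\{d' \geq d_\gamma\}$ is genuinely clopen and $\mathcal{L}$-definable --- the delicate point being the behaviour along the locus where the two minima coincide and along the common zeros of the competing coordinates --- is exactly the kind of statement furnished by the closedness theorem, used here in the same spirit as in the proof of Proposition~\ref{retract}. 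Once this clopenness is in hand the induction closes and yields the desired $\mathcal{L}$-definable retraction $\omega: U_C \to C$.
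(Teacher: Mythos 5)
Your overall strategy---gluing the coordinate retractions of Corollary~\ref{ret-2} over the overlap by comparing valuative ``distances''---is essentially the paper's: there the two-stratum case is treated with $d_{\alpha}(x)=\min_{j\in\alpha}v(x_{j})$, $d_{\beta}(x)=\min_{j\in\beta}v(x_{j})$, and $U_{\alpha}\cap U_{\beta}=U_{\gamma}$ is split into the three pieces $\{d_{\alpha}>d_{\beta}\}$, $\{d_{\beta}>d_{\alpha}\}$, $\{d_{\alpha}=d_{\beta}\}$, on which $\omega_{\alpha}$, $\omega_{\beta}$ and the projection $\omega_{\gamma}$ onto $C_{\alpha}\cap C_{\beta}$ are used, respectively. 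However, the step you single out as the technical heart is false as stated: the comparison set $V'=\{d'\geq d_{\gamma}\}$ is closed but \emph{not} open, and no appeal to the closedness theorem will make it so. Take $n=2$, $C'=\{x_{1}=0\}$, $C_{\gamma}=\{x_{2}=0\}$, so that $d'(x)=v(x_{1})$ and $d_{\gamma}(x)=v(x_{2})$. The origin lies in $V'$ (since $\infty\geq\infty$), yet every neighborhood of it contains points $(t,0)$, $t\neq 0$, with $d'=v(t)<\infty=d_{\gamma}$, i.e.\ points of $V_{\gamma}$. The same phenomenon occurs at every point of $C'\cap C_{\gamma}$, so your claims that the two pieces are clopen and that continuity requires ``no compatibility condition along a shared boundary'' are precisely where the argument breaks. (The paper's three-way split has the same feature: $\{d_{\alpha}=d_{\beta}\}$ is not open along $C_{\alpha}\cap C_{\beta}$.)

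The construction survives, but for the reason you disclaim rather than the one you invoke. The ultrametric inequality shows that $\min_{j\in\alpha}v(x_{j})$ is locally constant wherever it is finite; hence $d'$ and $d_{\gamma}$ are locally constant off $C'\cup C_{\gamma}$, and the only points of $V'$ adherent to $V_{\gamma}$ are those where $d'=d_{\gamma}=\infty$, i.e.\ points of $C'\cap C_{\gamma}$. At such a point both $\omega'$ and $\omega_{\gamma}$ restrict to the identity, so continuity of the glued map there follows from continuity of $\omega_{\gamma}$ alone; this boundary check is exactly the content of the paper's ``it is easy to check.'' So replace ``clopen'' by ``$V'$ closed, $V_{\gamma}$ open'' and supply this boundary argument; with that correction your induction---which, to your credit, is actually set up for arbitrarily many strata via $d'=\max_{\alpha}\min_{j\in\alpha}v(x_{j})$, whereas the paper only writes out the two-stratum step---closes correctly, and the tie-breaking choice ($\omega'$ on ties, versus the paper's $\omega_{\gamma}$) is immaterial.
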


\begin{proof}
The proof is by induction on the number of coordinate
submanifolds. Here we consider only the case of two coordinate
submanifolds, say $C_{\alpha}$ and $C_{\beta}$ with $\alpha =
(\alpha_{1}, \ldots, \alpha_{p})$ and $\beta = (\beta_{1}, \ldots,
\beta_{q})$, leaving the general induction step for the reader.

Clearly, the restrictions of $\omega_{\alpha}$ and
$\omega_{\beta}$ to the clopen subsets $U_{\alpha} \setminus
U_{\beta}$ and $U_{\beta} \setminus U_{\alpha}$ are retractions
onto $C_{\alpha} \setminus C_{\beta}$ and $C_{\beta} \setminus
C_{\alpha}$, respectively. Thus it remains to find an
$\mathcal{L}$-definable refraction $\omega$ of $U_{\beta} \cap
U_{\alpha}$ onto $C_{\beta} \cap C_{\alpha}$. Obviously,
$C_{\alpha} \cap C_{\beta} = C_{\gamma}$ for a unique $\gamma$,
and we have $U_{\alpha} \cap U_{\beta} = U_{\gamma}$. Put
$$ d_{\alpha}(x) := \min \{
   v(x_{\alpha_{1}}),\ldots,v(x_{\alpha_{p}}) \}, \ \
   d_{\beta}(x) := \min \{
   v(x_{\beta_{1}}),\ldots,v(x_{\beta_{q}}) \}, $$
$$ D_{\alpha} := \{ x \in U_{\gamma}: d_{\alpha}(x) > d_{\beta}(x) \}, \
   \ D_{\beta} := \{ x \in U_{\gamma}: d_{\beta}(x) > d_{\alpha}(x) \}, $$
$$ \text{and} \ \  D_{\gamma} := \{ x \in U_{\gamma}: d_{\alpha}(x) =
   d_{\beta}(x) \}. $$
It is easy to check that the formula
$$ \omega_{C}(x) = \left\{ \begin{array}{cl}
                        \omega_{\alpha}(x) & \mbox{ if }  x \in D_{\alpha}, \\
                        \omega_{\beta}(x) & \mbox{ if }  x \in D_{\beta}, \\
                        \omega_{\gamma}(x) &  \mbox{ if } x \in D_{\gamma}.
                       \end{array}
               \right.
$$
defines a retraction $\omega_{C}:U \to C$, we are looking for.
\end{proof}

Since $U$ is a clopen $\mathcal{L}$-definable subset of $U$, we
immediately obtain

\begin{corollary}\label{ret-4}
Every snc subvariety $C$ of $U$ is an $\mathcal{L}$-definable
refract of $U$ and of $M$.  \hspace*{\fill} $\Box$
\end{corollary}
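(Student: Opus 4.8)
The plan is to leverage the retraction $\omega_{C}\colon U_{C} \to C$ produced in Proposition~\ref{ret-3} together with the abundance of clopen sets in the $K$-topology, so that the extension first from $U_{C}$ to $U$ and then from $U$ to $M$ requires nothing more than gluing with a constant map.

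First I would record that each neighborhood $U_{\alpha}$ is clopen in $U$, hence their finite union $U_{C}$ is a clopen $\mathcal{L}$-definable subset of $U$; consequently the complement $U \setminus U_{C}$ is clopen as well. Fixing any point $c_{0} \in C$ (which exists since a coordinate submanifold, being a trace of smooth divisors, is non-empty), I would define $\omega\colon U \to C$ by setting $\omega := \omega_{C}$ on $U_{C}$ and $\omega := c_{0}$ on $U \setminus U_{C}$. This map is $\mathcal{L}$-definable, it restricts to the identity on $C \subset U_{C}$, and it is continuous: every point of $U$ has a clopen neighborhood contained entirely in one of the two pieces, on each of which $\omega$ agrees with a continuous map. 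Thus $C$ is an $\mathcal{L}$-definable retract of $U$.

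To pass from $U$ to $M$, I would invoke Corollary~\ref{cor-SNC}, according to which $U = U_{l}$ is a member of a clopen $\mathcal{L}$-definable partition of $M(K)$; in particular $U$ is clopen in $M$ and so is $M \setminus U$. I would then repeat the same gluing, defining $\widetilde{\omega}\colon M \to C$ by $\widetilde{\omega} := \omega$ on $U$ and $\widetilde{\omega} := c_{0}$ on $M \setminus U$. As before, clopenness of the decomposition makes $\widetilde{\omega}$ continuous and $\mathcal{L}$-definable, and it is the identity on $C$, so $C$ is an $\mathcal{L}$-definable retract of $M$ as well.

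The only point demanding any care is the continuity of the glued maps, and this is precisely where the non-Archimedean setting trivializes the argument: in the totally disconnected $K$-topology a clopen partition allows one to splice definable maps defined on distinct pieces with no compatibility condition along a common boundary. The genuine content, namely constructing a retraction in a neighborhood of $C$, has already been carried out in Proposition~\ref{ret-3}, so no substantial obstacle remains here.
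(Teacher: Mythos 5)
Your proof is correct and follows essentially the same route as the paper, which derives the corollary immediately from the clopenness of $U_{C}$ in $U$ and of $U$ in $M$; you merely spell out the gluing with a constant map $c_{0}\in C$ that the paper leaves implicit (modulo the harmless edge case $C=\emptyset$, which the paper also ignores).
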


\begin{remark}\label{affine}
Let $V_{\infty} \subset \mathbb{P}^{n}(K)$ be the hyperplane at
infinity. In the foregoing reasonings, take into account both the
subset $V$ and $V_{\infty}$ with respect to simultaneous
transformation to s.n.c.\ divisors. Then it is not difficult to
check that the above results are valid mutatis mutandi in the case
where we consider the charts over the affine space $K^{n}$:
$$ U_{l}^{0} := U_{l} \setminus \sigma^{-1}(V_{\infty}), \ \ j=1,\ldots,s, $$
on $M^{0} := M \setminus  \sigma^{-1}(V_{\infty})$. Applying this
to Corollary~\ref{ret-4}, we see that $C^{0} := C \setminus
\sigma^{-1}(V_{\infty})$ is an $\mathcal{L}$-definable refract of
$U_{0}$ and of $M_{0}$

\end{remark}

\section{Proof of Theorem~\ref{main}}

We first prove the descent property:

\begin{lemma}\label{descent}
Let $\sigma: M \to N$ be a continuous $\mathcal{L}$-definable
surjective map which is $\mathcal{L}$-definably closed, and $A$ be
a closed $\mathcal{L}$-definable subset of $N$ such that $\sigma$
is bijective over $N \setminus A$. Then if $\varrho: M \to
\sigma^{-1}(A)$ is an $\mathcal{L}$-definable retraction, so is
the map $\omega: N \to A$ defined by the formula
$$ \omega(a) = \left\{ \begin{array}{cl}
                        \sigma \left( \varrho \left( \sigma^{-1}(a) \right) \right) & \mbox{ if } \ a \in N \setminus A, \\
                        a & \mbox{ if } \ a \in A.
                        \end{array}
               \right.
$$
\end{lemma}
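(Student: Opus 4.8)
The plan is to check four things about the map $\omega$: that it is well defined, that it is $\mathcal{L}$-definable, that it restricts to the identity on $A$ (so it is a retraction once continuity is in hand), and that it is continuous on all of $N$. The first three are essentially formal. Over $N \setminus A$ the map $\sigma$ is a continuous $\mathcal{L}$-definable bijection onto an open set, so $\sigma^{-1}(a)$ is a single, $\mathcal{L}$-definably determined point and $\omega$ is well defined there; on $A$ it is the identity; and the two definable pieces assemble into an $\mathcal{L}$-definable map into $A$ with $\omega|_{A} = \mathrm{id}_{A}$. For continuity away from $A$ I would use the standard fact that the restriction of a (definably) closed map to the preimage of a subset is again closed: thus $\sigma$ restricts to a continuous closed $\mathcal{L}$-definable bijection $\sigma^{-1}(N \setminus A) \to N \setminus A$, hence a homeomorphism, so $\sigma^{-1}$ is continuous there and $\omega = \sigma \circ \varrho \circ \sigma^{-1}$ is a composition of continuous maps on the open set $N \setminus A$.

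The main obstacle is continuity at the points of $A$, and this is exactly where the closedness theorem must enter, in the same spirit as Proposition~\ref{intersection}. Fix $a_{0} \in A$ and suppose, for contradiction, that $\omega$ is discontinuous at $a_{0}$. Since $\omega$ agrees with the identity on $A$, the witnessing points must lie in $N \setminus A$, so there is an $r \in \Gamma$ such that for every $\rho \in \Gamma$ one finds $a \in N \setminus A$ with $v(a - a_{0}) > \rho$ yet $v(\omega(a) - a_{0}) \le r$. Writing $m = \sigma^{-1}(a)$ for the unique preimage, I would encode this in the $\mathcal{L}$-definable set
$$ E := \{ (t,m) \in K^{\circ} \times M : \sigma(m) \in N \setminus A, \ v(\sigma(m) - a_{0}) > v(t), \ v(\sigma(\varrho(m)) - a_{0}) \le r \}, $$
so that, by the failure of continuity, $0$ is an accumulation point of the projection of $E$ onto the first factor.

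Now I would apply the closedness theorem exactly as in Proposition~\ref{intersection} (here $M$ is a closed $\mathcal{L}$-definable subset of a projective space, which supplies the required compactness) to produce an accumulation point $(0,m_{0})$ of $E$ with $m_{0} \in M$. Along the accumulating net one has $v(\sigma(m) - a_{0}) > v(t) \to \infty$, so $\sigma(m) \to a_{0}$, and continuity of $\sigma$ forces $\sigma(m_{0}) = a_{0}$, i.e.\ $m_{0} \in \sigma^{-1}(A)$. Since $\varrho$ retracts onto $\sigma^{-1}(A)$, we get $\varrho(m_{0}) = m_{0}$ and hence $\sigma(\varrho(m_{0})) = \sigma(m_{0}) = a_{0}$ by continuity of $\sigma \circ \varrho$. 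On the other hand, the condition $v(\sigma(\varrho(m)) - a_{0}) \le r$ cuts out a closed set, so it passes to the limit and gives $v(\sigma(\varrho(m_{0})) - a_{0}) \le r$; together with $\sigma(\varrho(m_{0})) = a_{0}$ this reads $+\infty \le r$, a contradiction. I expect this last step — converting the topological failure of continuity into a definable family and extracting, via the closedness theorem, a point of the fiber $\sigma^{-1}(a_{0})$ on which $\varrho$ is forced to be the identity — to be the crux of the argument; everything else is either formal or a direct transcription of the scheme already used for Proposition~\ref{intersection}.
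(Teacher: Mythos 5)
Your proof is correct in the setting where the lemma is actually applied, but it takes a genuinely different and much heavier route than the paper's. The paper's proof is purely formal: since $\varrho$ is the identity on $\sigma^{-1}(A)$ and $\sigma$ is bijective over $N \setminus A$, one has the identity $\omega \circ \sigma = \sigma \circ \varrho$, hence $\sigma^{-1}\left(\omega^{-1}(E)\right) = \varrho^{-1}\left(\sigma^{-1}(E)\right)$ is closed for every closed $\mathcal{L}$-definable $E \subset N$; surjectivity then gives $\omega^{-1}(E) = \sigma\left(\sigma^{-1}\left(\omega^{-1}(E)\right)\right)$, which is closed because $\sigma$ is $\mathcal{L}$-definably closed, and since the topology has a definable basis this yields continuity. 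No accumulation points, no closedness theorem, no properness of $M$. Your argument instead splits into continuity off $A$ (your homeomorphism argument there is fine) and continuity at points of $A$, handled by the accumulation-point scheme of Proposition~\ref{intersection}; the limit computation at $(0,m_{0})$ is correct. The caveat is that your route silently strengthens the hypotheses: to apply the closedness theorem to your set $E \subset K^{\circ} \times M$ you need $M$ to be a closed $\mathcal{L}$-definable subset of $\mathbb{P}^{N}(K)$ (or of $(K^{\circ})^{N}$), a properness condition that is not among the hypotheses of Lemma~\ref{descent}, whose only assumption in this direction is that $\sigma$ itself is definably closed (and closedness of a map does not pass to products with an identity, so this cannot be derived). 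In the paper's application --- Proposition~\ref{special}, where $M$ is a blow-up of projective space --- your extra assumption holds, so your proof suffices there; but as a proof of the lemma as stated it establishes a weaker result. What each approach buys: yours makes explicit where the non-Archimedean geometry could enter, while the paper's shows the lemma is a purely categorical fact about definably closed surjections, which is exactly why it can be stated without reference to any ambient space.
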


\begin{proof}
We must show that the map $\omega$ is continuous. Take any closed
$\mathcal{L}$-definable subset $E \subset N$. Then
$$ \sigma^{-1} \left( \omega^{-1} (E) \right) = \varrho^{-1}
   \left( \sigma^{-1} (E) \right) $$
is a closed subset of $M$. Hence $\omega^{-1}(E) = \sigma \left(
\sigma^{-1} \left( \omega^{-1} (E) \right) \right)$ is a closed
subset of $N$, which is the required result.
\end{proof}

In view of Remark~\ref{affine}, Corollary~\ref{ret-4} along with
Lemma~\ref{descent} yield the following special case of
Theorem~\ref{main}.

\begin{proposition}\label{special}
For each Zariski closed subset $Y$ of $K^{n}$, there exists an
$\mathcal{L}$-definable retraction $\varrho: K^{n} \to Y$.

\end{proposition}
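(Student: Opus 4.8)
The plan is to reduce the statement about an arbitrary Zariski closed subset $Y$ of the affine space $K^n$ to the snc case already handled in Section~2, using resolution of singularities together with the descent property of Lemma~\ref{descent}. First I would view $K^n$ as the complement of the hyperplane $V_\infty$ at infinity inside $\mathbb{P}^n(K)$, and take $V$ to be the projective (Zariski) closure of $Y$ in $\mathbb{P}^n(K)$. Applying Corollary~\ref{cor-SNC} (via Theorem~\ref{SNC}), I obtain a finite composite $\sigma\colon M\to K\mathbb{P}^n$ of blow-ups along smooth centers, an isomorphism over the complement of $V$, together with a clopen $\mathcal{L}$-definable partition $\{U_l\}$ of $M(K)$ in which $\sigma^{-1}(V)$ is an snc divisor in suitable local coordinates on each chart.

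Next I would pass to the affine picture exactly as prescribed in Remark~\ref{affine}: treating $V$ and $V_\infty$ simultaneously under the transformation, I restrict everything to $M^0 := M\setminus \sigma^{-1}(V_\infty)$ and the affine charts $U_l^0 := U_l\setminus\sigma^{-1}(V_\infty)$. On $M^0$ the preimage $C^0 := \sigma^{-1}(Y)$ is an snc subvariety (a finite union of coordinate submanifolds in each chart, once $V_\infty$ is removed). By Corollary~\ref{ret-4} together with Remark~\ref{affine}, $C^0$ is an $\mathcal{L}$-definable retract of $M^0$; call the retraction $\varrho\colon M^0\to C^0$. This is the content that Section~2 was built to supply, so this step is essentially a matter of correctly invoking it.

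Finally I would apply the descent Lemma~\ref{descent} to the restricted map $\sigma\colon M^0\to K^n$. Here $N = K^n$, $A = Y$, and $\sigma$ is bijective over $K^n\setminus Y$ because the resolution is an isomorphism over the complement of $V$. One must check the two hypotheses of the lemma: that $\sigma$ is a continuous, surjective, $\mathcal{L}$-definably closed map onto $K^n$. Continuity and surjectivity are immediate from the construction, and $\mathcal{L}$-definable closedness is precisely where the \emph{closedness theorem} enters (properness of the blow-up at the level of $K$-rational points). Granting these, Lemma~\ref{descent} produces from $\varrho$ the desired $\mathcal{L}$-definable retraction $\omega\colon K^n\to Y$, and the proof is complete.

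I expect the main obstacle to be the affine bookkeeping of the second step rather than any deep new idea: one has to be careful that removing $\sigma^{-1}(V_\infty)$ leaves $C^0$ genuinely snc (hence the need to blow up $V$ and $V_\infty$ simultaneously to normal crossings), and that the neighborhoods $U_\alpha$ from Section~2 still cover $C^0$ after this excision. The verification that $\sigma\colon M^0\to K^n$ is $\mathcal{L}$-definably closed, though conceptually clear via the closedness theorem, is the one hypothesis that genuinely requires the machinery of \cite{Now-1,Now-2} rather than formal manipulation.
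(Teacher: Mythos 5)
Your proposal is correct and follows essentially the same route as the paper, which derives Proposition~\ref{special} exactly by combining Remark~\ref{affine}, Corollary~\ref{ret-4} and Lemma~\ref{descent}; you have merely spelled out the intermediate steps (resolution via Corollary~\ref{cor-SNC}, the affine restriction, and the verification that $\sigma$ is $\mathcal{L}$-definably closed via the closedness theorem) that the paper leaves implicit.
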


We now state a description of $\mathcal{L}$-definable sets due to
van den Dries~\cite{Dries}; see~\cite[Section~9]{Now-2} for the
adaptation to the language of Denef-Pas considered here.

\begin{proposition}\label{description}
Every $\mathcal{L}$-definable subset $A$ of $K^{n}$ is a finite
union of intersections of Zariski closed with special open subsets
of $K^{n}$. A fortiori, $A$ is of the form
\begin{equation}\label{present}
  A = (V_{1} \cap G_{1}) \cup \ldots \cup
  (V_{s} \cap G_{s}),
\end{equation}
where $G_{j}$ are clopen ${\mathcal{L}}$-definable subsets of
$K^{n} \setminus W_{j}$, and $V_{j}$ and $W_{j}$ are Zariski
closed subsets of $K^{n}$, $j=1,\ldots,s$, which may occur with
repetition. We may, of course, assume that the sets $V_{j}$ are
irreducible.  \hspace*{\fill}$\Box$
\end{proposition}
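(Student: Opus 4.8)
The plan is to derive this statement from the Denef--Pas quantifier elimination, which is the genuine content here; the passage to the geometric form \eqref{present} is then a matter of bookkeeping together with one elementary topological observation. The language $\mathcal{L}$ is three-sorted, with a valued-field sort carrying the ring language, the value-group sort $\Gamma$, and the residue-field sort $k$, linked by the valuation $v$ and an angular component map $\overline{\mathrm{ac}}$. The Denef--Pas theorem asserts elimination of the valued-field quantifiers: every $\mathcal{L}$-definable subset $A\subset K^{n}$ is defined by a formula $\Phi(x)$ which is a Boolean combination of (i) polynomial equations $f(x)=0$ with coefficients in $K$, (ii) value-group conditions $\psi(v(g_{1}(x)),\dots,v(g_{p}(x)))$, and (iii) residue-field conditions $\theta(\overline{\mathrm{ac}}(h_{1}(x)),\dots,\overline{\mathrm{ac}}(h_{q}(x)))$, where $f,g_{i},h_{j}$ are polynomials and $\psi,\theta$ range over formulas of the value-group and residue-field sorts. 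First I would put $\Phi$ into disjunctive normal form, writing $A$ as a finite union of sets $A_{j}$, each cut out by a single conjunction of atoms of types (i)--(iii) and their negations.

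Next I would refine the partition by case-splitting on the vanishing of each polynomial occurring as an argument of $v$ or $\overline{\mathrm{ac}}$. On each resulting piece every such polynomial is either forced to vanish --- an algebraic condition, which I absorb into the Zariski closed part and which simplifies the atom where that polynomial occurs (e.g.\ $v(g(x))=\infty$) --- or forced to be non-zero. After this finite refinement I may assume that in every surviving condition of type (ii), (iii), and in every inequality $f(x)\neq 0$, all the polynomial arguments are required to be non-zero. The positive equalities $f(x)=0$ then define a Zariski closed set $V_{j}$, and I let $W_{j}$ be the Zariski closed set defined by the vanishing of all the arguments of the surviving non-algebraic atoms together with the polynomials occurring in the inequalities; the remaining conditions carve out a subset $G_{j}\subset K^{n}\setminus W_{j}$, and by construction $A_{j}=V_{j}\cap G_{j}$.

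The one substantive point is that $G_{j}$ is clopen in $K^{n}\setminus W_{j}$, which is exactly what makes it a special open set in the sense of the statement. This rests on the ultrametric fact that for a polynomial $g$ the functions $x\mapsto v(g(x))$ and $x\mapsto \overline{\mathrm{ac}}(g(x))$ are \emph{locally constant} on the open set $\{g\neq 0\}$: if $g(a)\neq 0$, then for all $x$ in a sufficiently small ball about $a$ one has $v(g(x)-g(a))>v(g(a))$, whence $v(g(x))=v(g(a))$ and $\overline{\mathrm{ac}}(g(x))=\overline{\mathrm{ac}}(g(a))$. Consequently, whatever the formulas $\psi$ and $\theta$ may be, every condition of type (ii) or (iii) factors through locally constant functions and is therefore locally constant on $K^{n}\setminus W_{j}$, as is each inequality $f(x)\neq 0$; their conjunction $G_{j}$ is thus clopen there.

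Finally I would assemble the pieces: $A=\bigcup_{j}(V_{j}\cap G_{j})$ is the required presentation \eqref{present}, with $V_{j},W_{j}$ Zariski closed and $G_{j}$ clopen in $K^{n}\setminus W_{j}$; decomposing each $V_{j}$ into irreducible components and distributing the union lets me assume the $V_{j}$ irreducible, possibly with repetitions. I expect the main obstacle to lie not in these manipulations but in invoking the correct form of Denef--Pas elimination over a value group of arbitrary rank rather than merely $\mathbb{Z}$-valued, which is precisely the adaptation recorded in \cite[Section~9]{Now-2}; once that is in hand, the local-constancy observation makes the geometric translation routine.
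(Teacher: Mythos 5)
Your argument is correct and is essentially the standard one: the paper itself gives no proof of Proposition~\ref{description}, deferring to van den Dries~\cite{Dries} and to the Denef--Pas adaptation in \cite[Section~9]{Now-2}, and what you reconstruct --- valued-field quantifier elimination, disjunctive normal form, case-splitting on vanishing of the arguments of $v$ and $\overline{\mathrm{ac}}$, and the ultrametric local constancy of $v(g)$ and $\overline{\mathrm{ac}}(g)$ off the zero set of $g$ --- is precisely the content of those references. The only cosmetic slip is that your $W_{j}$ should be the \emph{union} of the zero sets of the relevant polynomials (equivalently, the zero set of their product), not the set where they all vanish simultaneously.
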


In order to proceed with noetherian induction, we shall rephrase
Theorem~\ref{main} as follows.

\begin{theorem}\label{induct}
Let $X$ be a Zariski closed subset of $K^{n}$. For each closed
$\mathcal{L}$-definable subset $A$ of $X$, there exists an
$\mathcal{L}$-definable retraction $X \to A$.
\end{theorem}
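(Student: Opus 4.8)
The plan is to prove Theorem~\ref{induct} by Noetherian induction on the Zariski closed set $X$, using Proposition~\ref{special} as the base of the reduction and Proposition~\ref{description} to control the shape of the target set $A$. First I would set up the induction: suppose the statement holds for every Zariski closed subset of $K^{n}$ properly contained in $X$, and aim to produce the retraction $X \to A$ for a given closed $\mathcal{L}$-definable $A \subset X$. Using Proposition~\ref{description}, I would write
\[
  A = (V_{1} \cap G_{1}) \cup \ldots \cup (V_{s} \cap G_{s}),
\]
with the $V_{j}$ irreducible Zariski closed, $W_{j}$ Zariski closed, and $G_{j}$ clopen $\mathcal{L}$-definable in $K^{n} \setminus W_{j}$. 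The key dichotomy is whether some $V_{j}$ equals $X$ itself or whether all the $V_{j}$ are contained in proper Zariski closed subsets of $X$.

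In the generic situation I would expect all $V_{j}$ with $V_{j} = X$ to contribute a piece $X \cap G_{j} = G_{j}$ which is clopen in $X \setminus W_{j}$, while the remaining pieces live on proper subvarieties. The strategy is to separate $A$ into the part $A \cap (X \setminus W)$ that is already clopen in the complement of a proper Zariski closed set $W := \bigcup_{j} W_{j}$ (together with the $V_{j} \subsetneq X$), and the part sitting over $W$. On $X \setminus W$ the relevant pieces of $A$ are clopen, so a retraction there can be built directly, essentially by the identity on $A$ and by pushing the complementary clopen region onto $A$ using the local retraction techniques of Section~2. Over the proper closed set $X' := W \cup \bigcup \{ V_{j} : V_{j} \subsetneq X\}$, the trace $A \cap X'$ is a closed $\mathcal{L}$-definable subset of the smaller variety $X'$, so the inductive hypothesis furnishes a retraction $X' \to A \cap X'$. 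The hard part will be gluing these two retractions into a single continuous $\mathcal{L}$-definable map $X \to A$ across the boundary where the clopen pieces meet $X'$.

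To carry out the gluing I would use the separation and partition-of-unity style results already established, namely Corollary~\ref{partition} and the continuity-of-intersections Proposition~\ref{intersection}, to produce a clopen $\mathcal{L}$-definable neighborhood of $X'$ inside $X$ on which the inductive retraction extends, and a complementary clopen region retracting onto the clopen part of $A$. Since the pieces are clopen, the glued map is automatically continuous, and $\mathcal{L}$-definability is preserved because every construction is $\mathcal{L}$-definable. The main obstacle I anticipate is ensuring that the retraction obtained over $X'$ and the one obtained over $X \setminus X'$ agree (or can be made to agree) on the overlap and land in the correct pieces of $A$; this is where one must exploit that $A \cap X'$ is closed in $X'$ and that the clopen neighborhoods can be shrunk via Proposition~\ref{intersection} so as to be compatible with the decomposition of $A$. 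Once the gluing is arranged, Theorem~\ref{induct} follows, and taking $X = K^{n}$ immediately gives Theorem~\ref{main}.
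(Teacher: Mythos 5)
Your overall skeleton is the paper's: Noetherian induction on $X$, the van den Dries presentation $A=\bigcup_j(V_j\cap G_j)$ from Proposition~\ref{description}, a proper Zariski closed subset carrying the ``non-clopen'' part of $A$, the inductive hypothesis there, Proposition~\ref{special} to retract $X$ onto that subset, and a case-defined gluing with the identity on the clopen piece. However, there are two concrete problems. First, your reduction set $X' := W\cup\bigcup\{V_j : V_j\varsubsetneq X\}$ need not be a \emph{proper} subset of $X$ when $X$ is reducible: the $V_j$ are irreducible, so if $X$ has several irreducible components then \emph{every} $V_j$ is properly contained in $X$, and their union can still exhaust $X$ (e.g.\ $V_1=X_1$, $V_2=X_2$). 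Your dichotomy ``some $V_j=X$ versus all $V_j\varsubsetneq X$'' therefore collapses and the Noetherian induction does not terminate. The paper avoids this by fixing one irreducible component $X_1$, letting the indices with $V_j=X_1$ and $W_j\cap X_1\varsubsetneq X_1$ supply the clopen piece $E$, and throwing all the other components $X_2\cup\ldots\cup X_t$ wholesale into the proper subset $Y$; properness of $Y$ then follows from irreducibility of $X_1$ alone. Without some device of this kind your argument fails for reducible $X$, and you also silently need the preliminary case where the Zariski closure of $A$ is already proper in $X$ (handled in the paper by composing Proposition~\ref{special} with the inductive retraction).

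Second, the gluing is both simpler and different from what you describe. There is no overlap to reconcile and no need for Proposition~\ref{intersection}, Corollary~\ref{partition}, or shrinking of clopen neighborhoods: one sets $\omega=\mathrm{id}$ on the clopen set $E\subset A$ and $\omega=\psi\circ\varrho$ on the closed complement $X\setminus E$, where $\varrho:X\to Y$ comes from Proposition~\ref{special} and $\psi:Y\to B:=A\cap Y$ from the induction. Continuity at the frontier of $E$ holds because $E$ is clopen in $X\setminus Y$, so every accumulation point of $E$ outside $E$ lies in $Y$, hence in $B$ (as $A$ is closed), where $\psi\circ\varrho$ is the identity. In particular your plan to ``push the complementary clopen region onto $A$ using the local retraction techniques of Section~2'' is not what happens and would not make sense here: Section~2 concerns snc divisors on a resolved model, not arbitrary clopen subsets of $X$; the complementary region is simply sent into $Y$ by $\varrho$ and then into $B$ by $\psi$.
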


\begin{proof}

Let $Z$ be the Zariski closure of $A$ in $K^{n}$. If $Z
\varsubsetneq X$, then the conclusion of the theorem follows
directly from Proposition~\ref{special} and the induction
hypothesis.

So suppose that $Z=X$ and let $X_{1},\ldots,X_{t}$ be the
irreducible components of $X$. We can, of course, assume that each
irreducible set $V_{j}$ from presentation~\ref{present} is
contained in a component $X_{k}$. Then the set of indices $j$ with
$V_{j}=X_{1}$ and $W_{j} \cap X_{1} \varsubsetneq X_{1}$, say
$1,\ldots,p$, is non-empty. We may assume, after renumbering, that
the indices $j$ with $V_{j} \varsubsetneq X_{1}$ are $p+1,\ldots,
q$. Then
$$ (W_{1} \cap X_{1}) \cup \ldots \cup (W_{p} \cap X_{1})
   \cup V_{p+1} \cup \ldots \cup V_{q} \varsubsetneq X_{1} $$
and
$$ Y := (W_{1} \cap X_{1}) \cup \ldots \cup (W_{p} \cap X_{1})
   \cup V_{p+1} \cup \ldots \cup V_{q} \cup X_{2} \cup \ldots \cup X_{t} \varsubsetneq X. $$

Further, the set
$$ E := \left( (G_{1} \cup \ldots \cup G_{p}) \cap X_{1} \right) \setminus Y $$
is, of course, clopen in $X_{1} \setminus Y = X \setminus Y$.
Clearly, $A = E \cup B$ with $B := A \cap Y$, and $a \in B$ for
every point $a \in Y$ that is an accumulation point of $E$.

By the induction hypothesis, there exists an
$\mathcal{L}$-definable retraction $\psi: Y \to B$. Now, take an
$\mathcal{L}$-definable retraction $\varrho: X \to Y$ from
Proposition~\ref{special}. Then it is easy to check that the map
$\omega: X \to E \cup B$ defined by the formula
$$ \omega(a) = \left\{ \begin{array}{cl}
                        \psi (\varrho (a))  & \mbox{ if } \ a \in X \setminus E, \\
                        a & \mbox{ if } \ a \in E.
                        \end{array}
               \right.
$$
is an $\mathcal{L}$-definable retraction. This completes the proof
of Theorem~\ref{induct} and, a fortiori, of Theorem~\ref{main}.
\end{proof}

\section{A non-Archimedean Dugundji extension theorem}

Theorem~\ref{main} (on the existence of $\mathcal{L}$-definable
retractions onto closed $\mathcal{L}$-definable subsets of the
affine space $K^{n}$) yields a non-Archimedean definable analogue
of the Dugundji on the existence of a linear (and continuous)
extender, stated below. This extension problem, going back to
Dugundji~\cite{Du}, was extensively studied by many specialists
(see e.g.~\cite{D-L-P} for references).

\vspace{1ex}

Let us introduce the following notation. Given a topological space
$X$, denote by $\mathcal{C}^{*}(X,K)$ the $K$-linear space of all
continuous bounded $K$-valued functions on $X$, equipped with the
topology of uniform convergence. This topology is metrizable by
the supremum norm whenever the ground field $K$ is with absolute
value or, equivalently, is rank one valued. For a closed
$\mathcal{L}$-definable subset $A$ of $K^{n}$, denote by
$\mathcal{C}^{\mathcal{L}}(A,K)$ the $K$-linear space of all
continuous $\mathcal{L}$-definable $K$-valued functions on $A$.


\begin{theorem}\label{Dug}
For each closed $\mathcal{L}$-definable subset $A$ of $K^{n}$,
there exists a linear extender
$$ T : \mathcal{C}^{\mathcal{L}}(A,K) \to \mathcal{C}^{\mathcal{L}}(K^{n},K) $$
that is continuous in the topology of uniform convergence.
\end{theorem}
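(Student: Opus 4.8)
The plan is to build the linear extender $T$ directly from the definable retraction $\varrho\colon K^{n}\to A$ furnished by Theorem~\ref{main}. Given $f\in\mathcal{C}^{\mathcal{L}}(A,K)$, I would simply set
\[
   (Tf)(x) := f(\varrho(x)), \qquad x\in K^{n}.
\]
The first thing to check is that $Tf$ lands in the right space. Since $\varrho$ is $\mathcal{L}$-definable and continuous and $f$ is $\mathcal{L}$-definable and continuous, the composite $f\circ\varrho$ is an $\mathcal{L}$-definable continuous $K$-valued function on $K^{n}$, so $Tf\in\mathcal{C}^{\mathcal{L}}(K^{n},K)$. Because $\varrho$ restricts to the identity on $A$, we have $(Tf)|_{A}=f$, so $T$ is genuinely an extension operator. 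Linearity is immediate from the pointwise definition: $T(\lambda f+\mu g)=\lambda\,(f\circ\varrho)+\mu\,(g\circ\varrho)=\lambda\,Tf+\mu\,Tg$, since precomposition with a fixed map is $K$-linear.

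The remaining point, and the one that carries the real content of the statement, is continuity of $T$ in the topology of uniform convergence. Here I would observe that the topology on $\mathcal{C}^{*}$ is the topology of uniform convergence on the respective spaces, and that for the rank-one (absolute value) case this is induced by the supremum norm. The key inequality is that precomposition by $\varrho$ never increases the supremum norm:
\[
   \sup_{x\in K^{n}} \, |(Tf)(x)| \;=\; \sup_{x\in K^{n}} \, |f(\varrho(x))|
   \;\leq\; \sup_{y\in A} \, |f(y)|,
\]
the inequality holding because $\varrho(K^{n})\subset A$; in fact equality holds since $\varrho|_{A}=\mathrm{id}$. Thus $T$ is norm non-increasing, hence bounded, hence continuous. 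In the general higher-rank valued case, where the supremum norm need not make sense as a single real number, I would phrase the same argument in terms of the uniform structure directly: a basic entourage on $\mathcal{C}^{*}(K^{n},K)$ is determined by a $\gamma\in\Gamma$ via $\{(g,h): v(g(x)-h(x))>\gamma \text{ for all } x\}$, and the identity $v((Tf-Tg)(x))=v\bigl(f(\varrho(x))-g(\varrho(x))\bigr)$ together with $\varrho(x)\in A$ shows that $T$ pulls back the $\gamma$-entourage on the target into the $\gamma$-entourage on the source. This yields uniform (a fortiori, topological) continuity of $T$ without appealing to a norm.

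The main obstacle I anticipate is not in the construction, which is essentially forced, but in correctly handling the topology of uniform convergence in the general higher-rank valuation setting, where one cannot simply invoke the supremum norm and must instead argue at the level of the uniform structure as above. A secondary subtlety worth spelling out is the well-definedness of $Tf$ as a \emph{bounded} function when that is required: since $\varrho(K^{n})=A$ and $f$ is bounded on $A$, the function $Tf=f\circ\varrho$ takes values in the same bounded subset of $K$ as $f$, so boundedness is preserved and $T$ indeed maps $\mathcal{C}^{*}$ into $\mathcal{C}^{*}$. With these points addressed, the theorem follows directly from Theorem~\ref{main}.
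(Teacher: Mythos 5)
Your construction $Tf=f\circ\varrho$ with $\varrho$ the retraction from Theorem~\ref{main} is exactly the paper's proof, and your verification of linearity, the extension property, and continuity for uniform convergence (via the norm, or entourages in higher rank) correctly fills in the details the paper leaves implicit. The only slight slip is the digression about $\mathcal{C}^{*}$ and boundedness, which is not needed here since the theorem concerns $\mathcal{C}^{\mathcal{L}}(A,K)$ rather than the space of bounded functions.
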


\begin{proof}
Indeed, an $\mathcal{L}$-definable retraction $\omega: K^{n} \to
A$ exists by virtue of Theorem~\ref{main}. The operator $T$
defined by putting $T(f) := f \circ \omega$ is an extender we are
looking for.
\end{proof}

For the sake of comparison, two non-Archimedean, purely
topological versions from the papers~\cite[Cor.~2.5.23]{PG-Sch},
\cite[Theorem~5.24]{Ro} and from~\cite[Theorem~2]{K-K-S},
respectively, are recalled below. Note that the real topological
counterpart of the first one fails unless the space $X$ is
metrizable (cf.~\cite{Du,Ar,Mi}) or, more generally, stratifiable
(cf.~\cite{Bo}).

\vspace{2ex}

\begin{em}
1) Let $K$ be a locally compact field with non-Archimedean
absolute value and $A$ be a closed subspace of an ultranormal
space X. Then there exists an isometric (in the supremum norm)
linear extender

$$ T : \mathcal{C}^{*}(A,K) \to \mathcal{C}^{*}(X,K). $$
such that
$$ \sup\, \{ |Tf(x)| : x \in X \} = \sup\, \{ |Tf(x)| : x \in A \} $$
for all $f \in \mathcal{C}^{*}(X,K)$.
\end{em}

\vspace{2ex}

\begin{em}
2) Let $K$ be a discretely valued field and $A$ be a closed
subspace of an ultranormal space $X$. Then there exists an
isometric linear extender
$$ T : \mathcal{C}^{*}(A,K) \to \mathcal{C}^{*}(X,K) $$
whenever at least one of the following conditions holds:

i) $X$ is collectionwise normal; \

ii) $A$ is Lindel\"{o}f; \

iii) $K$ is separable.
\end{em}

\vspace{2ex}

Theorem~\ref{Dug} a fortiori yields the following non-Archimedean
version of the Tietze--Urysohn extension theorem.

\vspace{1ex}

\begin{theorem}~\label{TU}
Every continuous $\mathcal{L}$-definable function $f: A \to K$ on
a closed subset $A$ of $K^{n}$ has a continuous
$\mathcal{L}$-definable extension $F$ to $K^{n}$. \hspace*{\fill}
$\Box$
\end{theorem}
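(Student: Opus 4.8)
The plan is to deduce the statement as an immediate corollary of the definable retraction theorem, exactly in the spirit of the remark preceding it that Theorem~\ref{Dug} yields Tietze--Urysohn \emph{a fortiori}. First I would invoke Theorem~\ref{main}: since $A$ is a closed $\mathcal{L}$-definable subset of $K^{n}$, there is an $\mathcal{L}$-definable retraction $\omega: K^{n} \to A$. Given a continuous $\mathcal{L}$-definable function $f: A \to K$, I would then define the candidate extension by composition,
$$ F := f \circ \omega : K^{n} \to K. $$

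It then remains only to verify the three required properties, none of which involves a genuine calculation. That $F$ extends $f$ is immediate from the retraction property $\omega(a)=a$ for $a \in A$, so that $F(a) = f(\omega(a)) = f(a)$ on $A$. Continuity of $F$ with respect to the $K$-topology follows because it is a composition of the continuous maps $\omega$ and $f$. Finally $F$ is $\mathcal{L}$-definable, being a composition of $\mathcal{L}$-definable maps; concretely its graph is the image under a coordinate projection of the $\mathcal{L}$-definable set $\{ (x,y): (\omega(x),y) \in \mathrm{graph}(f) \}$. Equivalently, and matching the paper's phrasing, I could simply apply the linear extender $T$ from Theorem~\ref{Dug} to the single function $f$ and set $F := T(f)$, which by construction equals $f \circ \omega$.

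There is in truth no obstacle remaining at this stage: all the difficulty has been absorbed into the construction of the $\mathcal{L}$-definable retraction in Theorem~\ref{main}, which itself rests on the description of definable sets, resolution of singularities, and the closedness theorem. The one point deserving a moment's attention is that one must work throughout with the class $\mathcal{C}^{\mathcal{L}}(A,K)$ of \emph{all} continuous $\mathcal{L}$-definable functions rather than only the bounded ones, so that no boundedness or metrizability hypothesis on $f$ (of the kind appearing in the purely topological versions recalled above) is needed; the composition $f \circ \omega$ is unconditionally continuous and $\mathcal{L}$-definable. Thus the theorem follows at once, and the proof reduces to the short verification just described.
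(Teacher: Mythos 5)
Your proposal is correct and coincides with the paper's own argument: the paper derives Theorem~\ref{TU} \emph{a fortiori} from Theorem~\ref{Dug}, whose extender is precisely $T(f)=f\circ\omega$ for the retraction $\omega$ furnished by Theorem~\ref{main}, exactly as you construct $F$. The verifications you spell out (extension property, continuity, definability of the composition) are the same routine checks the paper leaves implicit.
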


The classical Tietze--Urysohn extension theorem says that every
continuous (and bounded) real valued map on a closed subset of a
normal space $X$ can be extended to a continuous (and bounded)
function on $X$. Afterwards the problem of extending maps into
metric spaces or locally convex linear spaces was investigated by
several mathematicians, i.al.\ by Borsuk~\cite{Bor},
Hausdorff~\cite{Hau}, Dugundji~\cite{Du}, Arens~\cite{Ar} or
Michael~\cite{Mi}. Eventually, Ellis~\cite{El-2} established some
analogues of their results, concerning the extension of continuous
maps defined on closed subsets of zero-dimensional spaces with
values in various types of metric spaces. They apply, in
particular, to continuous functions from ultranormal spaces into a
complete separable field with non-Archimedean absolute value and
to continuous functions from ultraparacompact spaces into an
arbitrary complete field with non-Archimedean absolute value.
Hence follows his analogue of the Tietze--Urysohn theorem
from~\cite{El-1} on extending continuous functions from
ultranormal spaces into a locally compact field with
non-Archimedean absolute value.

\vspace{1ex}

Note that ultranormal spaces are precisely those of great
inductive dimension zero (cf.~\cite[Chap.~7]{En}) and that the
class of ultraparacompact spaces coincides with that of
ultranormal and paracompact spaces. Finally, let us mention that
the projective spaces $\mathbb{P}^{n}(K)$, and even the affine
spaces $K^{n}$, are definably ultranormal by virtue of the
closedness theorem; the latter result, however, requires more
careful analysis.

\vspace{1ex}

We conclude the paper with the following comment.

\begin{remark}
The main results of this paper hold also over complete fields with separated power series and even over Henselian fields with analytic structure, as established in our recent papers~\cite{Now-4,Now-5}. Our interest to problems of non-Archimedean geometry was inspired by our joint paper~\cite{K-N}. Let us finally mention that the theory of Henselian fields with analytic structure, which unifies many earlier approaches within non-Archimedean analytic geometry, was developed in the papers~\cite{L-R,C-Lip-R,C-Lip-0,C-Lip}.
\end{remark}


\vspace{3ex}

\begin{small}
Institute of Mathematics

Faculty of Mathematics and Computer Science

Jagiellonian University


ul.~Profesora S.\ \L{}ojasiewicza 6

30-348 Krak\'{o}w, Poland

{\em E-mail address: nowak@im.uj.edu.pl}
\end{small}

\end{document}